\documentclass[12pt]{amsart}
\usepackage{amsmath,amssymb}
\usepackage[pagebackref=true,
pdftitle = {Frieze\ patterns\ as\ root\ posets\ and\ affine\ triangulations},
pdfkeywords = {frieze\ pattern; arrangement\ of\ hyperplanes; simplicial; Weyl\ groupoid},
pdfsubject = {52C35; 20F55},
pdfauthor = {Michael\ Cuntz}]
{hyperref}
\usepackage{hyperref}
\usepackage{longtable}
\usepackage{epsfig}
\usepackage{setspace}

\newtheorem{propo}{Proposition}[section]
\newtheorem{corol}[propo]{Corollary}
\newtheorem{theor}[propo]{Theorem}
\newtheorem{lemma}[propo]{Lemma}
\newtheorem{conje}[propo]{Conjecture}

\theoremstyle{definition}
\newtheorem{defin}[propo]{Definition}
\newtheorem{examp}[propo]{Example}

\theoremstyle{remark}
\newtheorem{remar}[propo]{Remark}

\numberwithin{equation}{section}

\newcommand{\NN }{\mathbb{N}}

\newcommand{\RR }{\mathbb{R}}

\newcommand{\ZZ }{\mathbb{Z}}

\newcommand{\id }{\mathrm{id}}

\newcommand{\Ac }{\mathcal{A}}

\newcommand{\Kc }{\mathcal{K}}

\newcommand{\imr}{\alpha_0}

\newcommand{\cAp }{\mathcal{E}}

\newcommand{\Rh }{\hat R_0}

\newcommand{\maxdense}{dense }
\newcommand{\maxdensen}{dense}

\title[Frieze patterns as root posets and affine triangulations]
{Frieze patterns as root posets\\ and affine triangulations}

\author{M.~Cuntz}
\address{Michael Cuntz,
Institut f\"ur Algebra, Zahlentheorie und Diskrete Mathematik,
Fakult\"at f\"ur Mathematik und Physik,
Leibniz Universit\"at Hannover,
Welfengarten 1,
D-30167 Hannover, Germany}
\email{cuntz@math.uni-hannover.de}

\begin{document}

\begin{abstract}
The entries of frieze patterns may be interpreted as coordinates of roots of a finite Weyl groupoid of rank two. We prove the existence of maximal elements in their root posets and classify those frieze patterns which can be used to build an affine simplicial arrangement.
\end{abstract}

\maketitle

\section{Introduction}

In this note we consider triangulations of convex polygons by non-intersecting diagonals, see Fig.\ \ref{etatriang} for five examples.
\begin{figure}
\begin{center}
\includegraphics[width=\textwidth]{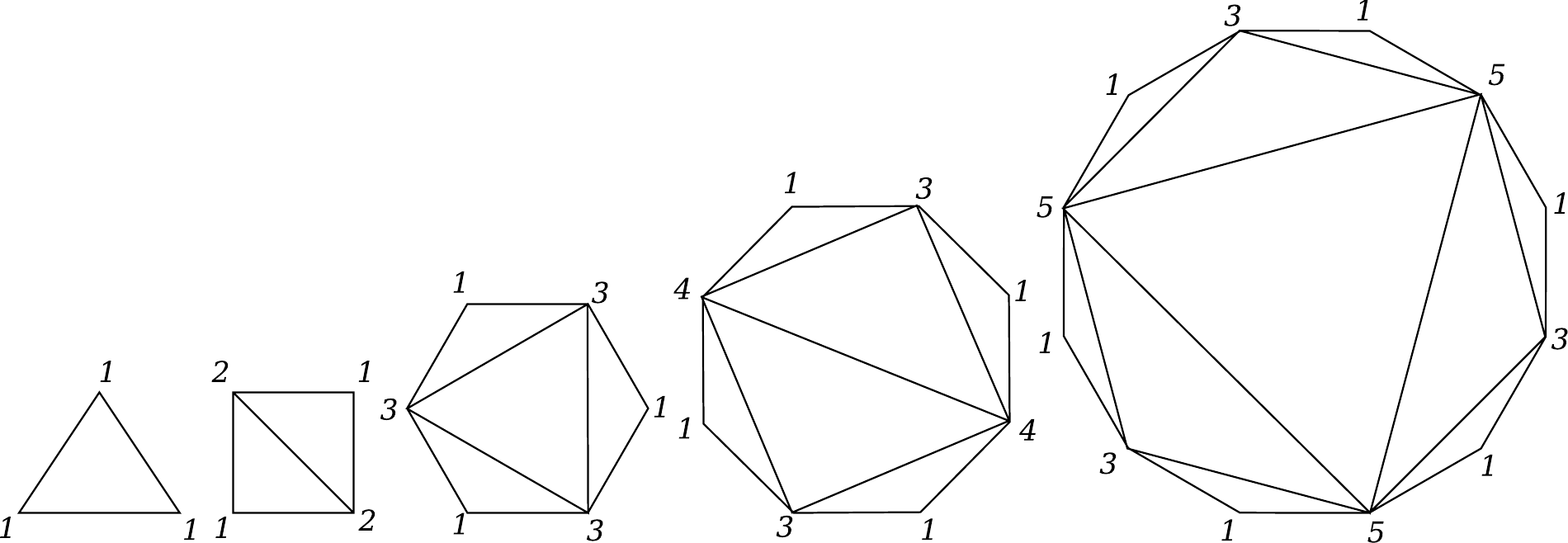}
\end{center}
\caption{Triangulations and $\eta$-sequences.\label{etatriang}}
\end{figure}
It is an easy exercise to show that such triangulations (combinatorially) correspond to what we call \emph{$\eta$-sequences}, see Def.\ \ref{eta_seq} or {\cite[Def.\ 3.2]{p-CH09d}} (these sequences are also called \emph{quiddity cycles} in \cite{jChC73}):
To each vertex we attach the number of triangles adjacent to this vertex and obtain a sequence $(c_1,c_2,\ldots,c_n)\in\NN^n$ (the numbers in Fig.\ \ref{etatriang}).
The number of $\eta$-sequences of length $(n+2)$ is the $n$-th Catalan number $C_n=\frac{1}{n+1}\binom{2n}{n}$. Indeed, $\eta$-sequences also correspond to Dyck words, expressions of parentheses, binary trees, noncrossing partitions, and there are many more constructions.

\begin{figure}
\begin{center}
\includegraphics[width=\textwidth]{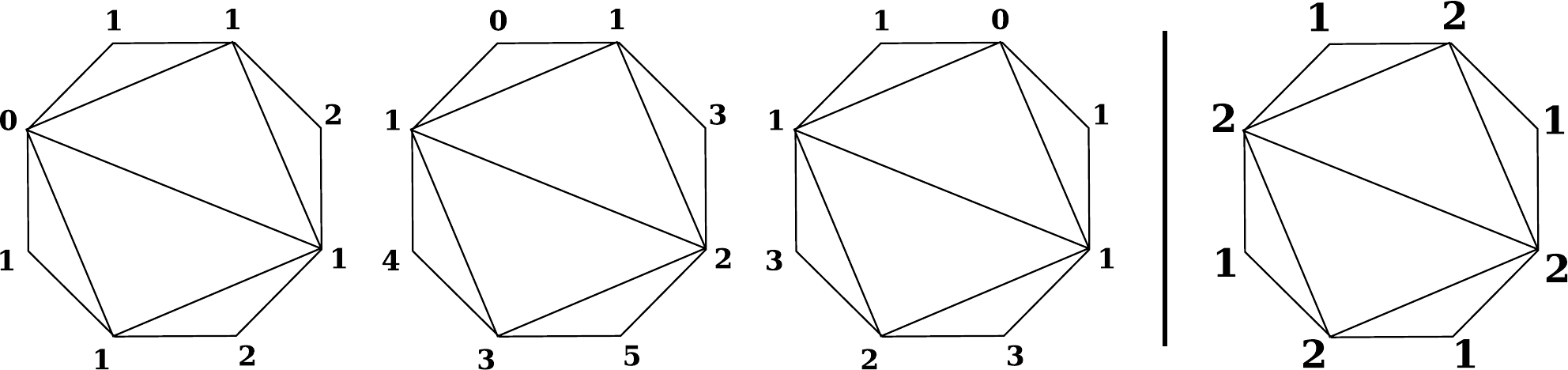}
\end{center}
\caption{Some values $\varphi_i(j)$ on the left, $|m_i|$ on the right.\label{phi_mis}}
\end{figure}

\begin{figure}
\begin{spacing}{0.6}
\[
\begin{array}{cccccccccccccccc}
 &0& &0& &0& &0& &0& &0& &0& &0\\
1& &1& &1& &1& &1& &1& &1& &1& \\
 &3& &1& &4& &1& &3& &1& &4& &1\\
2& &2& &3& &3& &2& &2& &3& &3& \\
 &1& &5& &2& &5& &1& &5& &2& &5\\
2& &2& &3& &3& &2& &2& &3& &3& \\
 &3& &1& &4& &1& &3& &1& &4& &1\\
1& &1& &1& &1& &1& &1& &1& &1& \\
 &0& &0& &0& &0& &0& &0& &0& &0
\end{array}
\]
\end{spacing}
\caption{The frieze pattern to the sequence $(3,1,4,1,3,1,4,1)$.\label{friezeex}}
\end{figure}
\begin{figure}
\begin{center}
\includegraphics[width=1.3\textwidth,clip=true,trim=100pt 200pt 0pt 180pt]{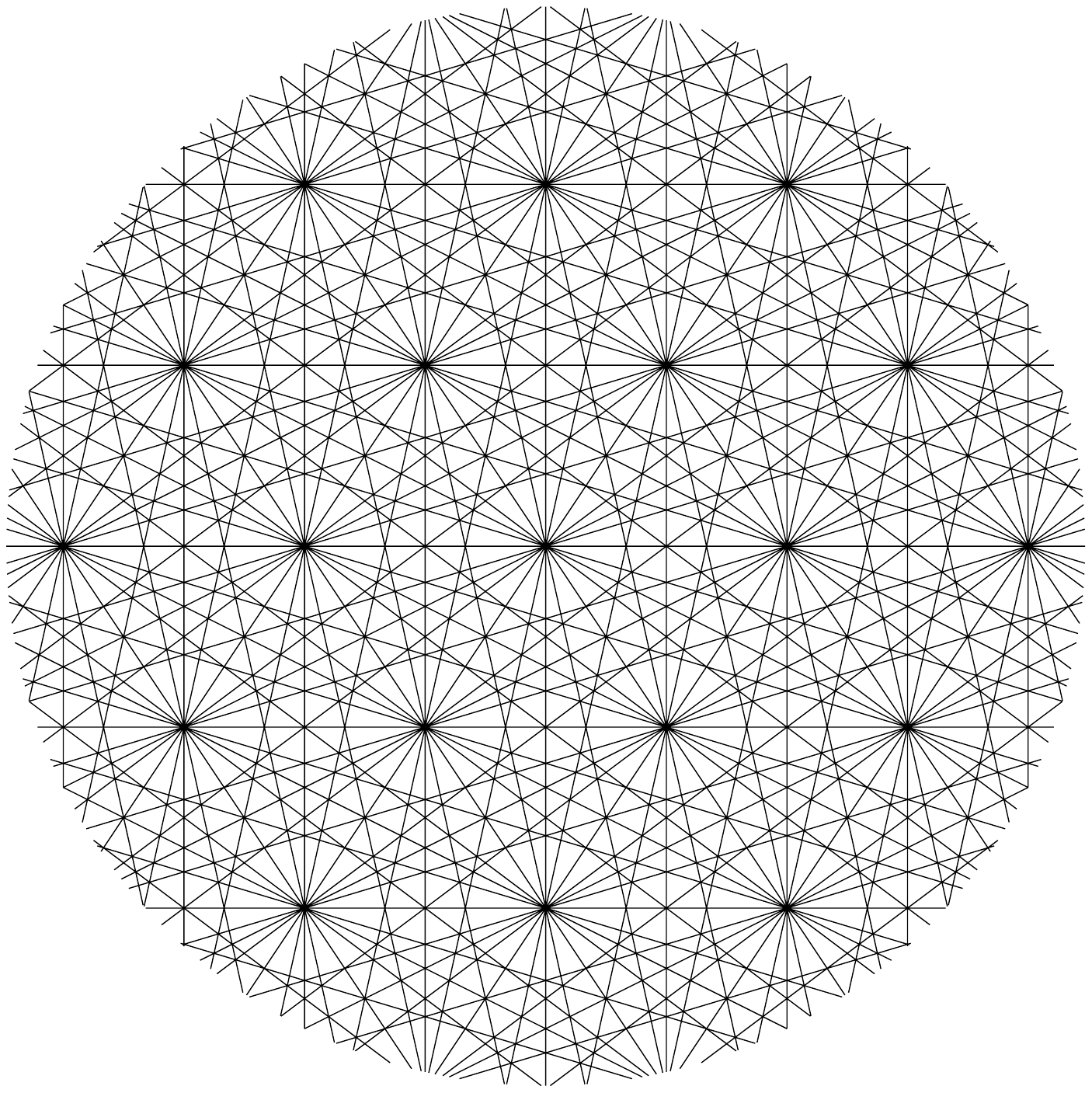}
\end{center}
\caption{An affine simplicial arrangement of imaginary type $A_1^{(1)}$ built upon the crystallographic arrangement with 
sequence $(1,3,1,5,1,3,1,5,1,3,1,5)$.\label{fig1315}}
\end{figure}

Now to each such triangulation and to each vertex $i$, attach numbers $\varphi_i(j)$, $j=1,\ldots,n$ in the following way (compare Fig.\ \ref{phi_mis} on the left side): Write $0$ at vertex $i$, $1$ at each adjacent vertex, and if two vertices of a triangle have labels $a$ and $b$ then write $a+b$ at the third vertex. The number written at vertex $j$ is $\varphi_i(j)$. The numbers $\varphi_i(j)$ are coordinates of roots of the associated \emph{Weyl groupoid} of rank two (see \cite{p-CH09d}), they also appeared earlier as entries of \emph{frieze patterns} (see for example \cite{jChC73} or \cite[p.\ 20]{MR1303141}), compare the Figures \ref{phi_mis} and \ref{friezeex}: The $\eta$-sequence is the second line of the frieze and the $\varphi_i(j)$ are all the entries.
Generalized frieze patterns have also been extensively studied recently in the context of cluster algebras, see for example \cite{pCfC06}.

For each $i$, let $m_i$ be the set of vertices $j$ such that $\varphi_i(j)$ is maximal among the numbers $\varphi_i(1),\ldots,\varphi_i(n)$. We call the triangulation \emph{dense} if there are no neighboring vertices $i,i+1$ such that $|m_i|=|m_{i+1}|=1$, where we identify $n+1$ and $1$ (see Fig.\ \ref{phi_mis} on the right side for an example for the numbers $|m_i|$).

The main result of this note is (Thm.\ \ref{cdense}):
\begin{theor}\label{thm1}
The dense triangulations are exactly the five triangulations displayed in Figure \ref{etatriang}.
\end{theor}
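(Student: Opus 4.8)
The plan is to argue entirely with the integers $\varphi_i(j)$, using two structural facts about them together with an induction that bounds the size of the polygon. The first fact concerns the defining rule: on the three vertices of any triangle of the triangulation the numbers $\varphi_i$ form a set of the shape $\{x,y,x+y\}$. Hence a triangle not containing the vertex $i$ has a single vertex carrying the largest of its three labels, namely the sum of the other two, while any triangle through $i$ carries labels $\{0,v,v\}$. From this one reads off the following: if two vertices $j,j'$ span an edge of the triangulation (a boundary edge or a diagonal) and $\varphi_i(j)=\varphi_i(j')=v$, then the vertex opposite to that edge in each adjacent triangle has label $0$ or $2v$. Taking $v=M_i:=\max_k\varphi_i(k)$ excludes the value $2v$ and forces that opposite vertex to have label $0$, i.e.\ to be $i$ itself; so $m_i$ contains two cyclically consecutive vertices $j,j+1$ only when $\{i,j,j+1\}$ is a triangle, and in particular, if $c_i=1$ and $n\ge4$, then $m_i$ has no two consecutive elements.

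The second fact is the ear-deletion recursion and its effect on the frieze. If $c_b=1$, delete the ear $\{a,b,c\}$: one obtains a triangulation of an $(n-1)$-gon whose $\eta$-sequence is $(c_1,\ldots,c_n)$ with the block $(c_a,1,c_c)$ replaced by $(c_a-1,c_c-1)$. Writing $\varphi'$ for its frieze and using linearity of the filling rule --- every label is an integer combination, with coefficients read from the triangulation, of the labels on any single edge it is propagated from --- one gets $\varphi_i(j)=\varphi'_i(j)$ for $i,j\ne b$, $\varphi_i(b)=\varphi_i(a)+\varphi_i(c)$ for $i\ne b$, and $\varphi_b(j)=\varphi'_a(j)+\varphi'_c(j)$ for $j\ne b$, with $a,c$ now neighbours in the $(n-1)$-gon. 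When $b$ is itself an ear of the original polygon this reads $\varphi_b=\varphi_{b-1}+\varphi_{b+1}$ on all vertices other than $b$, which ties $|m_b|$ to the two neighbouring rows.

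The easy direction is then a finite check: for each of the five triangulations in Figure~\ref{etatriang} one computes the cyclic word $(|m_1|,\ldots,|m_n|)$, as in Figure~\ref{phi_mis}, and verifies that its entries equal to $1$ are never cyclically adjacent. For the converse I would prove that a dense triangulation is extremely rigid. First, for $n\ge4$, density should force the ears to lie at alternate vertices; then $n$ is even, the $n/2$ non-ear vertices carry an induced triangulation $T_0$ of an $(n/2)$-gon, and $T$ is obtained from $T_0$ by adding $2$ to each entry of its $\eta$-sequence and inserting a $1$ after each entry --- equivalently, by gluing an ear onto every boundary edge of $T_0$. The formulas above then express the rows $\varphi_b$ for ears $b$ through the frieze of $T_0$, and density of $T$ should force a bound on the entries of that frieze; running the same analysis on $T_0$, this can hold only if $T_0$ is one of the smallest examples --- the degenerate $2$-gon, the triangle, the square, or the hexagon --- so $n\in\{3,4,6,8,12\}$ and $T$ is on the list. (This is presumably where the crystallographic ranks $2,3,4,6$ of the dihedral groups, and hence the imaginary type referred to in the abstract, enter.) The remaining polygons of bounded size are disposed of by direct computation.

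The step I expect to be the real obstacle is controlling $|m_i|$, which is genuinely non-local. Along the boundary the sequence $j\mapsto\varphi_i(j)$ need not be unimodal --- it already has two separate peaks in the hexagon with $\eta$-sequence $(3,1,3,1,3,1)$ --- so $m_i$ is not simply a top plateau and its cardinality is not determined by the $c_j$ near $i$. This complicates both halves of the rigidity argument: showing that a non-alternating block, such as two adjacent non-ears, forces two consecutive indices with $|m|=1$, and showing that $T_0$ cannot itself be a larger dense polygon. Both come down to a precise description of how $M_i$, $M_b$ and the maximizer sets transform when an ear is glued at a prescribed edge --- in particular, how the maximizers of a \emph{sum} $\varphi_b=\varphi'_a+\varphi'_c$ of two friezes behave. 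Making that transformation explicit, and thereby bounding the length of a cyclic $\eta$-word that can stay dense, is the technical core of the proof.
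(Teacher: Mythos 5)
Your write-up is a plan rather than a proof: the two pivotal steps are stated with ``should force'' and you yourself flag the ``technical core'' as unresolved, so there is a genuine gap at exactly the places where the paper has to work hardest. First, the claim that density forces the ears to sit at alternate vertices (hence $n$ even) is not proven by your local triangle fact. That fact only shows that two \emph{cyclically adjacent} maximizers force a fan, which is far weaker than what is needed. The paper gets alternation from a global statement: for any two ears $e,f$ the dominance set $\{\,i \mid \varphi_i(e)\ge\varphi_i(f)\,\}$ is a cyclic interval whose two boundary sets meet in at most two, non-adjacent, points (Prop.~\ref{gggelll}, Cor.~\ref{atmost2}); combined with the fact that every maximizer is an ear (Lemma \ref{lonlyears}), a double-counting of the intervals $I_e=\{i\mid e\in m_i\}$ yields $\#\{i : |m_i|>1\}\le \#\{\text{ears}\}$ (Prop.~\ref{earinmi}), and density then pinches this to equality, giving $n$ even, ears alternating, and $|m_i|$ alternating $1,2,1,2,\ldots$ with each $m_i$ of size $2$ equal to a pair $\{e,e+2\}$. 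Nothing in your proposal produces this upper bound on the number of indices with $|m_i|>1$, and your description of how maximizer sets behave under gluing an ear is precisely the transformation you admit you cannot control.

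Second, the finishing step ``density of $T$ should force a bound on the entries of the frieze of $T_0$, so $T_0$ is a $2$-gon, triangle, square or hexagon'' is asserted without any mechanism, and it is not obvious that density can be propagated through the ear-removal map $\psi$ at all. The paper does not iterate density; it argues arithmetically: locate a block $(\ast,1,3,1,\ast)$, take $i$ with $m_i=\{2,4\}$ (Cor.~\ref{ee2}), use the equality $\varphi_i(2)=\varphi_i(4)$ together with the coprimality of the two labels along any edge of the triangulation (Lemma \ref{gcdab}, a unimodularity/root-system fact absent from your proposal) to force the labels at the flanking vertices to be $1$, deduce $2<c_5\le 5$, and then settle the cases $c_5=3,4,5$ by short computations, using $\psi$ only to close the periodic cases. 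Without an analogue of this coprimality step and of the counting bound above, your outline cannot rule out longer dense sequences, so the converse direction remains unproved.
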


There are (at least) two applications for this result.

As for a Weyl group (see for example \cite{Arm2006}), one can associate a poset structure to the root systems of a Weyl groupoid. As an immediate corollary to Thm.\ \ref{cdense} we get that any finite Weyl groupoid of rank two has at least one object at which the root system has a unique maximal element (see Cor.\ \ref{maxelt_in_poset}).

The second application is a classification of the affine Weyl groupoids of rank three such that the parabolic subgroupoids containing an imaginary root are affine of type $A_1^{(1)}$ (see Thm.\ \ref{mainthm}). A complete classification of affine Weyl groupoids would be the first step in a classification of infinite dimensional Nichols algebras, see \cite{p-H-06} and \cite{p-AHS-08}.
We conjecture that the finite Weyl groupoids occurring as parabolic subgroupoids in affine Weyl groupoids of rank three are exactly the arithmetic root systems of rank two that appeared in the classification of finite dimensional Nichols algebras of diagonal type (see Conj.\ \ref{affwgconj}).

This note is organized as follows. Section \ref{densetriang} contains the proof of Thm.\ \ref{thm1}. In the last section we briefly recall the notions required for the mentioned applications and use Thm.\ \ref{thm1} to investigate root posets and affine Weyl groupoids.

\textbf{Acknowledgement.}
I am very grateful to the referees whose comments have helped me to improve the exposition considerably.

\section{Dense triangulations of polygons}\label{densetriang}

\begin{defin}
For $a\in\ZZ$, let
\[ \eta(a) := \begin{pmatrix} a & -1 \\ 1 & 0 \end{pmatrix}. \]
\end{defin}

\begin{remar}\label{eta_rule}
Notice the rule (compare \cite[Lemma 5.2]{p-CH09b})
\[ \eta(a)\eta(b) = \eta(a+1)\eta(1)\eta(b+1) \]
for all $a,b$.
\end{remar}

\begin{defin}[Compare {\cite[Def.\ 3.2]{p-CH09d}}]\label{eta_seq}
We define the set $\cAp$ of \emph{$\eta$-sequences} recursively by:
\begin{enumerate}
\item $(0,0)\in\cAp$.
\item If $(c_1,\ldots,c_n)\in\cAp$, then
$(c_2,c_3,\ldots,c_{n-1},c_n,c_1)\in\cAp$ and\\
$(c_n,c_{n-1},\ldots,c_2,c_1) \in\cAp$. \label{rgca_eta2}
\item If $(c_1,\ldots,c_n)\in\cAp$, then
$(c_1+1,1,c_2+1,c_3,\ldots,c_n)\in\cAp$. \label{rgca_eta3}
\end{enumerate}
\end{defin}

\begin{remar}\label{etatriangle}
If $(c_1,\ldots,c_n)\in\cAp$, then $\eta(c_1)\cdots\eta(c_n)=-\id$.
This is easy to see with the rule from Rem.\ \ref{eta_rule} which corresponds to Axiom \ref{rgca_eta3}.

An $\eta$-sequence of length $n$ may be visualized by a triangulation of a convex $n$-gon by non-intersecting diagonals (see Fig.\ \ref{etatriang}): An entry $c_i$ of the sequence corresponds to the $i$'th vertex; $c_i$ is the number of triangles ending at this vertex.
\end{remar}

\begin{defin}
Let $c=(c_1,\ldots,c_n)\in\cAp$.
An $1\le i\le n$ with $c_i=1$ is called an \emph{ear} of $c$.
\end{defin}

\begin{defin}
Let $(c_1,\ldots,c_n)\in\cAp$.
For a fixed $i\in\{1,\ldots,n\}$, let
\[ \varphi_i(j) :=
\begin{cases} 0 & j = i, \\
(\eta(c_{i})\cdots\eta(c_{j-1}))_{2,1} & j>i, \\
(\eta(c_{j})\cdots\eta(c_{i-1}))_{2,1} & j<i.
\end{cases} \]
\end{defin}

\begin{remar}\label{ci1}
Notice that $\varphi_i(j)=\varphi_j(i)$ for all $i,j$. Further, if $c_{i+1}=1$, then
\begin{eqnarray*}
\varphi_{i+1}(j) &=& \varphi_i(j)+\varphi_{i+2}(j),\\
\varphi_j(i+1) &=& \varphi_j(i)+\varphi_j(i+2)
\end{eqnarray*}
for all $j$.
Thus in the triangulation, a map $\varphi_i$ associates to each vertex a positive number in the following way (see the left three examples in Fig.\ \ref{phi_mis}, see also \cite[Section 3]{p-CH09d} for more details): Write a $0$ at vertex $i$ and $1$'s at all neighboring vertices. If $x,y,z$ are the vertices of a triangle and $x$, $y$ are labeled $v_x$, $v_y$ respectively, then $z$ is labeled $v_x+v_y$.
\end{remar}

\begin{lemma}\label{strictlyinc}
Let $c=(c_1,\ldots,c_n)\in\cAp$ and $1\le i<j\le n$. Assume that $c_\ell>1$ for all $\ell=i+1,\ldots,j-1$.
Then
\begin{eqnarray}
&&\varphi_i(i)<\varphi_i(i+1)<\ldots<\varphi_i(j-1)<\varphi_i(j),\\
&&\varphi_i(i)<\varphi_{i+1}(i)<\ldots<\varphi_{j-1}(i)<\varphi_j(i).
\end{eqnarray}
\end{lemma}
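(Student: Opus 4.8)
The plan is to collapse the two assertions into one and then recognise the resulting chain as governed by a three-term (continuant-type) recursion. First note that by the symmetry $\varphi_i(j)=\varphi_j(i)$ from Remark \ref{ci1}, the second chain $\varphi_i(i)<\varphi_{i+1}(i)<\cdots<\varphi_{j-1}(i)<\varphi_j(i)$ is just the first one rewritten, so it suffices to prove
\[ \varphi_i(i)<\varphi_i(i+1)<\cdots<\varphi_i(j-1)<\varphi_i(j). \]

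Abbreviate $p_k:=\varphi_i(k)$ for $i\le k\le j$; thus $p_i=0$, and for $k>i$ one has $p_k=(\eta(c_i)\cdots\eta(c_{k-1}))_{2,1}$, where by convention the empty product (case $k=i$) is the identity matrix. I would keep track of the entire bottom row $(p_k,q_k)$ of $\eta(c_i)\cdots\eta(c_{k-1})$, so that $(p_i,q_i)=(0,1)$. Since right multiplication by $\eta(c_k)$ sends a row $(p,q)$ to $(c_kp+q,-p)$, we obtain $q_k=-p_{k-1}$ for every $k\ge i+1$; in particular $p_{i+1}=1$, and
\[ p_{k+1}=c_kp_k-p_{k-1}\qquad\text{for all }k\ge i+1. \]

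Now I would induct on $k$ to show that $p_{k-1}<p_k$ for $i+1\le k\le j$. The base case is $p_i=0<1=p_{i+1}$. For the step, let $i+1\le k\le j-1$ and assume the chain $p_i<p_{i+1}<\cdots<p_k$ has already been established; then $p_k\ge p_{i+1}=1$, and the hypothesis of the lemma yields $c_k\ge 2$ since $i+1\le k\le j-1$. Hence
\[ p_{k+1}=c_kp_k-p_{k-1}\ge 2p_k-p_{k-1}>2p_k-p_k=p_k, \]
which extends the chain by one more step and completes the induction.

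I do not expect a genuine obstacle: the content is the identification of the recursion, after which everything is forced. The only points requiring care are the index bookkeeping — the identity $q_k=-p_{k-1}$ holds only for $k\ge i+1$, and the inequality $c_k\ge 2$ may be invoked only for $i+1\le k\le j-1$, which is consistent with the fact that neither $c_i$ nor $c_j$ enters the computation of $p_{i+1},\dots,p_j$. Positivity of the $p_k$, needed to make the inequalities strict, is guaranteed in any case by Remark \ref{ci1}, but it also drops out of the induction directly.
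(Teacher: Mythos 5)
Your proof is correct and follows essentially the same route as the paper: the same three-term recursion $\varphi_i(k+1)=c_k\varphi_i(k)-\varphi_i(k-1)$ established from the matrix product, followed by the same induction using $c_k\ge 2$. The only cosmetic difference is that you invoke the symmetry $\varphi_i(j)=\varphi_j(i)$ from Remark \ref{ci1} to dispose of the second chain, whereas the paper runs the identical recursion in parallel for $\varphi_\ell(i)$.
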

\begin{proof}
We proceed by induction: $\varphi_i(i)=0<1=\varphi_i(i+1)=\varphi_{i+1}(i)$, and for $\ell>i+1$,
\begin{eqnarray*}
&&\varphi_i(\ell)=c_{\ell-1} \varphi_i(\ell-1)-\varphi_i(\ell-2)>\varphi_i(\ell-1),\\
&&\varphi_\ell(i)=c_{\ell-1} \varphi_{\ell-1}(i)-\varphi_{\ell-2}(i)>\varphi_{\ell-1}(i),
\end{eqnarray*}
since $c_{\ell-1}\ge 2$.
\end{proof}

The following proposition will be the key to the main theorem of this section.

\begin{propo}\label{gggelll}
Let $c=(c_1,\ldots,c_n)\in\cAp$ and $1\le e_1<e_2\le n$ be ears.
Then there exists an $\ell$, $e_1\le\ell< e_2$ with
\begin{eqnarray}
\label{sat1} &&\varphi_j(e_1)<\varphi_j(e_2) \quad\text{for}\quad e_1\le j<\ell,\\
\label{sat2} &&\varphi_j(e_1)>\varphi_j(e_2) \quad\text{for}\quad \ell<j\le e_2,\\
\label{sat3} &&\varphi_\ell(e_1)\le \varphi_\ell(e_2).
\end{eqnarray}
\end{propo}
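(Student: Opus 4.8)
The plan is to compare the two ``distance functions'' $a_j:=\varphi_j(e_1)$ and $b_j:=\varphi_j(e_2)$ along the arc $e_1\le j\le e_2$ through their difference $v_j:=b_j-a_j$. First I would record the boundary data $a_{e_1}=0$, $a_{e_1+1}=1$, $a_{e_2}=\varphi_{e_1}(e_2)$ and, symmetrically, $b_{e_2}=0$, $b_{e_2-1}=1$, $b_{e_1}=\varphi_{e_1}(e_2)$, together with the positivity $b_j>0$ for $e_1\le j\le e_2-1$ and $a_j>0$ for $e_1<j\le e_2$ (Rem.~\ref{ci1}). Writing $d:=\varphi_{e_1}(e_2)=\varphi_{e_2}(e_1)>0$, this yields $v_{e_1}=d>0$ and $v_{e_2}=-d<0$, so $v$ changes sign on the arc; the whole point is to show this sign change is ``clean''. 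The three-term recursion in the proof of Lemma~\ref{strictlyinc} gives $a_{j+1}=c_ja_j-a_{j-1}$ for $j\ge e_1+1$, and an entirely analogous matrix computation starting from $\varphi_i(j)=(\eta(c_j)\cdots\eta(c_{i-1}))_{2,1}$ for $j<i$ gives the ``mirror'' recursion $b_{j+1}=c_jb_j-b_{j-1}$ for $j\le e_2-1$; hence $v_{j+1}=c_jv_j-v_{j-1}$ for $e_1+1\le j\le e_2-1$.

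The key step is the observation that the discrete Wronskian $W_j:=v_jb_{j+1}-v_{j+1}b_j$ is constant for $e_1\le j\le e_2-1$: this is a formal consequence of the two recursions above, and evaluating at $j=e_1$ (where $v_{e_1}=b_{e_1}=d$ and $v_{e_1+1}=b_{e_1+1}-1$) gives $W_j\equiv d>0$. Since $b_j>0$ on $e_1\le j\le e_2-1$, I can divide by $b_jb_{j+1}$ to get $u_j-u_{j+1}=d/(b_jb_{j+1})>0$ for $e_1\le j\le e_2-2$, where $u_j:=v_j/b_j$; so $u$ is strictly decreasing on $\{e_1,\ldots,e_2-1\}$ with $u_{e_1}=d/d=1>0$, and $v_j$ and $u_j$ have the same sign on this range.

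Finally I would put $\ell:=\max\{\,j: e_1\le j\le e_2-1,\ u_j\ge 0\,\}$, which exists because $u_{e_1}=1\ge 0$, and then $e_1\le\ell<e_2$. Strict monotonicity of $u$ forces $u_j>u_\ell\ge 0$, hence $v_j>0$, for $e_1\le j<\ell$, which is \eqref{sat1}; it gives $v_\ell=u_\ell b_\ell\ge 0$, which is \eqref{sat3}; and maximality of $\ell$ gives $u_j<0$, hence $v_j<0$, for $\ell<j\le e_2-1$, while $v_{e_2}=-d<0$, so that $v_j<0$ for all $\ell<j\le e_2$, which is \eqref{sat2}. Unwinding $v_j=\varphi_j(e_2)-\varphi_j(e_1)$ then gives the statement.

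The steps that require care are all bookkeeping: deriving the mirror recursion for $b$ (Lemma~\ref{strictlyinc} only supplies the recursion ``away from'' its base vertex, whereas here the indices $j<e_2$ lie on the near side of $e_2$), tracking the precise index ranges on which each recursion — and hence the constancy of $W_j$ — is valid, and dealing with the degenerate case $e_2=e_1+1$, where $\{e_1,\ldots,e_2-1\}$ is a single point and nearly everything becomes vacuous. I expect this endpoint bookkeeping to be the only real obstacle: once the constancy of $v_jb_{j+1}-v_{j+1}b_j$ is established, the monotonicity of $v_j/b_j$ makes the conclusion immediate. It is perhaps worth noting that the argument uses neither $c_j\ge 2$ nor the hypothesis that $e_1,e_2$ are ears — only positivity of the $\varphi$'s.
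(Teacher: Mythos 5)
Your proposal is correct, and it takes a genuinely different route from the paper. The paper first handles the case with no branching between $e_1$ and $e_2$ via the monotonicity of Lemma \ref{strictlyinc}, and then runs an induction on the number of triangles attached to the path between the two ears, using the additivity $\varphi_{i+1}(j)=\varphi_i(j)+\varphi_{i+2}(j)$ at an ear $i+1$ (Rem.\ \ref{ci1}) together with a short case analysis to show that the crossing index $\ell$ survives each attachment. You instead prove the single-crossing statement in one shot: with $a_j=\varphi_j(e_1)$, $b_j=\varphi_j(e_2)$, $v_j=b_j-a_j$, both $a$ and $b$ satisfy the three-term relation $x_{j+1}=c_jx_j-x_{j-1}$ on the open arc (the same relation \eqref{j1j1} the paper uses later), so the discrete Wronskian $v_jb_{j+1}-v_{j+1}b_j$ is constant, equal to $d=\varphi_{e_1}(e_2)>0$ by the boundary values $a_{e_1}=0$, $a_{e_1+1}=1$, $b_{e_1}=d$; dividing by $b_jb_{j+1}>0$ makes $v_j/b_j$ strictly decreasing, and $\ell$ is the last index in $\{e_1,\ldots,e_2-1\}$ where this ratio is nonnegative, which immediately yields \eqref{sat1}, \eqref{sat2}, \eqref{sat3}. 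Your bookkeeping is accurate (validity ranges of the two recursions, the division range $j\le e_2-2$, the degenerate case $e_2=e_1+1$), and the only external inputs — positivity and symmetry of the $\varphi$'s and the three-term relation — are exactly what the paper itself takes from Rem.\ \ref{ci1} and uses in Lemma \ref{lonlyears}. What your route buys: no induction, no case distinctions, a transparent strict single crossing (strict except possibly at $\ell$), and, as you observe, a stronger statement since neither the hypothesis that $e_1,e_2$ are ears nor $c_j\ge 2$ is needed. What the paper's route buys is that it stays within the combinatorial mechanism of attaching and removing ears (Rem.\ \ref{eta_rule}), the same device reused in the proof of Thm.\ \ref{cdense}, at the price of a more delicate induction.
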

\begin{proof}
Assume first that $c_\ell>1$ for all $e_1<\ell<e_2$ (this means that there are no branches in the tree dual to the triangulation between $e_1$ and $e_2$).
Then by Lemma \ref{strictlyinc} and Rem.\ \ref{ci1},
\begin{eqnarray}
\label{e1e2_1}\varphi_{e_1}(e_1)<\varphi_{e_1+1}(e_1)<\ldots<\varphi_{e_2-1}(e_1)<\varphi_{e_2}(e_1),\\
\label{e1e2_2}\varphi_{e_1}(e_2)>\varphi_{e_1+1}(e_2)>\ldots>\varphi_{e_2-1}(e_2)>\varphi_{e_2}(e_2).
\end{eqnarray}
But $\varphi_{e_2}(e_1)>\varphi_{e_2}(e_2)=0=\varphi_{e_1}(e_1)<\varphi_{e_1}(e_2)$, thus there exists an $\ell$
satisfying (\ref{sat1}), (\ref{sat2}), and (\ref{sat3}).

Now to obtain the result for arbitrary $\eta$-sequences we may proceed by induction on the number of triangles which need to be attached to the shortest path of adjacent triangles from $e_1$ to $e_2$ in the triangulation corresponding to $c$ (this is unique because of the structure of a tree):
If we have a triangle at position $(i,i+1,i+2)$, i.e.\ $c_{i+1}=1$, then
\[ \varphi_{i+1}(j) = \varphi_i(j)+\varphi_{i+2}(j) \]
by Rem.\ \ref{ci1}.
Induction gives us an $\tilde\ell$ for the $\eta$-sequence without this triangle, i.e.\ removing the ear $i+1$ using the rule from Remark \ref{eta_rule}:
\begin{eqnarray*}
&&\varphi_j(e_1)<\varphi_j(e_2)\text{ for }i+1\ne j=e_1,\ldots,\tilde\ell-1,\\
&&\varphi_{\tilde\ell}(e_1)\le \varphi_{\tilde\ell}(e_2),\\
&&\varphi_j(e_1)>\varphi_j(e_2)\text{ for }i+1\ne j=\tilde{\ell}+1,\ldots,e_2.
\end{eqnarray*}
In the following two cases, $\ell=\tilde\ell$ is the wanted $\ell$ satisfying (\ref{sat1}), (\ref{sat2}), and (\ref{sat3}):\\
If $\varphi_i(e_1)<\varphi_i(e_2)$ and $\varphi_{i+2}(e_1)\le\varphi_{i+2}(e_2)$ then
\[ \varphi_{i+1}(e_1)=\varphi_i(e_1)+\varphi_{i+2}(e_1)<\varphi_i(e_2)+\varphi_{i+2}(e_2)=\varphi_{i+1}(e_2). \]
If $\varphi_i(e_1)\ge \varphi_i(e_2)$ and $\varphi_{i+2}(e_1)>\varphi_{i+2}(e_2)$ then
\[ \varphi_{i+1}(e_1)=\varphi_i(e_1)+\varphi_{i+2}(e_1)>\varphi_i(e_2)+\varphi_{i+2}(e_2)=\varphi_{i+1}(e_2). \]
Now if $\varphi_i(e_1)<\varphi_i(e_2)$ and $\varphi_{i+2}(e_1)>\varphi_{i+2}(e_2)$ then an adequate $\ell\in\{i,i+1\}$ exists as well:
If $\varphi_{i+1}(e_1)>\varphi_{i+1}(e_2)$, then choose $\ell=i$. If $\varphi_{i+1}(e_1)\le\varphi_{i+1}(e_2)$, then choose $\ell=i+1$.
\end{proof}

\begin{corol}\label{atmost2}
Let $c=(c_1,\ldots,c_n)\in\cAp$ and $1\le e_1<e_2\le n$ be ears. Then
\[ |\{ i\in\{1,\ldots,n\} \mid \varphi_i(e_1)=\varphi_i(e_2)\}| \le 2. \]
Moreover, if there are $i<j$ with $\varphi_i(e_1)=\varphi_i(e_2)$ and $\varphi_j(e_1)=\varphi_j(e_2)$, then either $i<e_1<j$ or $i<e_2<j$, so $j-i>1$.
\end{corol}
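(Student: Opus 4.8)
The plan is to apply Proposition~\ref{gggelll} twice, once to the arc from $e_1$ to $e_2$ in the given linear order and once to the complementary arc, and then to splice together the two resulting sign patterns. I would write $f(i):=\varphi_i(e_1)-\varphi_i(e_2)$ for $i\in\{1,\dots,n\}$, so that the corollary says exactly that $f$ has at most two zeros, and that two zeros $i<j$ are always separated by $e_1$ or by $e_2$. The first thing to record is that the endpoints are never zeros: since $e_1\ne e_2$ and $\varphi_i(j)>0$ for $i\ne j$ by Remark~\ref{ci1}, we get $f(e_1)=\varphi_{e_1}(e_1)-\varphi_{e_1}(e_2)=-\varphi_{e_1}(e_2)<0$ and $f(e_2)=\varphi_{e_2}(e_1)>0$.

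Next, Proposition~\ref{gggelll} applied to $c$ with the ears $e_1<e_2$ gives an $\ell$ with $e_1\le\ell<e_2$ such that $f(j)<0$ for $e_1\le j<\ell$, $f(\ell)\le 0$, and $f(j)>0$ for $\ell<j\le e_2$. Hence on the whole arc $\{e_1,\dots,e_2\}$ the function $f$ vanishes at most once, and by the previous paragraph any such zero lies strictly between $e_1$ and $e_2$. Then I would rotate the $\eta$-sequence cyclically (permitted by Definition~\ref{eta_seq}) so that it starts at $e_2$: this does not change the values $\varphi_i(j)$, only relabels the vertices, and it turns $e_2,e_1$ into a pair of ears $1<k$ of a new $\eta$-sequence. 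Applying Proposition~\ref{gggelll} to the rotated sequence and translating back through the relabeling yields the analogous statement for the complementary arc $\{e_2,e_2+1,\dots,n,1,\dots,e_1\}$: $f$ vanishes at most once there, and not at $e_1$ or $e_2$.

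Finally, since the open arc $(e_1,e_2)$ and its complement partition $\{1,\dots,n\}\setminus\{e_1,e_2\}$ and neither endpoint is a zero, $f$ has at most two zeros. If it has exactly two, $i<j$, then one of them, say $\ell$, satisfies $e_1<\ell<e_2$, and the other, say $m$, lies in $\{1,\dots,e_1-1\}$ or in $\{e_2+1,\dots,n\}$. In the first case $m<e_1<\ell$, so $(i,j)=(m,\ell)$ and $i<e_1<j$ with $j-i=\ell-m\ge(e_1+1)-(e_1-1)=2$; in the second case $\ell<e_2<m$, so $(i,j)=(\ell,m)$ and $i<e_2<j$ with $j-i=m-\ell\ge 2$. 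Either way $j-i>1$. The step that needs the most care is the cyclic rotation: under the relabeling the roles of $e_1$ and $e_2$ get interchanged, so that $f$ is replaced by $-f$ on the corresponding vertices, and one must check that this still yields ``at most one sign change on the complementary arc, not occurring at an endpoint''; everything else is routine bookkeeping with the inequalities of Proposition~\ref{gggelll}.
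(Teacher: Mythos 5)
Your argument is correct and is essentially the paper's own proof: apply Proposition~\ref{gggelll} once to the arc from $e_1$ to $e_2$ and once (via cyclic symmetry of $\eta$-sequences and of the $\varphi_i(j)$) to the complementary arc, and observe that $\varphi_{e_1}(e_1)\ne\varphi_{e_1}(e_2)$ and $\varphi_{e_2}(e_1)\ne\varphi_{e_2}(e_2)$, so there is at most one equality point in each open arc. Your extra care about the rotation/relabeling (where the difference becomes $-f$) only makes explicit what the paper leaves implicit.
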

\begin{proof}
By Prop.\ \ref{gggelll} there is at most one $i$ with $\varphi_i(e_1)=\varphi_i(e_2)$ in each of the sets $\{i\mid e_1<i<e_2\}$ and $\{i\mid e_2<i \text{ or } i<e_1\}$. Further, $\varphi_{e_1}(e_1)\ne\varphi_{e_1}(e_2)$ and $\varphi_{e_2}(e_1)\ne\varphi_{e_2}(e_2)$.
\end{proof}

\begin{defin}\label{defmis}
Let $(c_1,\ldots,c_n)\in\cAp$. For a fixed $i\in\{1,\ldots,n\}$, let
\[ m_i := \{ j\in\{1,\ldots,n\} \mid \varphi_i(j)\ge \varphi_i(\ell) \text{ for all } \ell=1,\ldots,n \}. \]
\end{defin}

\begin{examp}
The right triangulation in Fig.\ \ref{phi_mis} is an example for the numbers $|m_i|$.
\end{examp}

\begin{defin}\label{deffan}
Let $c=(c_1,\ldots,c_n)\in\cAp$. Call $c$ \emph{fan-shaped} if
up to rotations $c=(n-2,1,2,\ldots,2,1)$.
\end{defin}

\begin{lemma}\label{lonlyears}
Let $c=(c_1,\ldots,c_n)\in\cAp$, $i\in\{1,\ldots,n\}$, and $j\in m_i$. Then either $j$ is an ear
or $c$ is fan-shaped.
\end{lemma}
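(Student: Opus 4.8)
The plan is to show that if $j \in m_i$ is \emph{not} an ear, then $c$ must be fan-shaped. So assume $c_j > 1$. The key observation is that since $j$ realizes the maximum of $\varphi_i$, it cannot be "improved" by moving to an adjacent vertex. Concretely, consider the two neighbors $j-1$ and $j+1$ of $j$ in the $n$-gon. Using the three-term relation governing the $\varphi_i$ values along the triangulation (the recursion $\varphi_i(\ell) = c_{\ell-1}\varphi_i(\ell-1) - \varphi_i(\ell-2)$ from Lemma \ref{strictlyinc}, together with Rem.\ \ref{ci1}), I would analyze what $c_j > 1$ forces. The idea is that between two ears the function $\varphi_i$ is unimodal (strictly increasing then strictly decreasing, by Lemma \ref{strictlyinc} applied to the branch-free stretch), so a non-ear maximum point $j$ is sandwiched: on one side $\varphi_i$ decreases toward an ear $e_1$, on the other toward an ear $e_2$, and maximality at $j$ means the recursion cannot push the value higher at either neighbor.

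First I would reduce to the case where $i$ itself is an ear, or at least handle $i = j$ trivially (if $i=j$ then $\varphi_i(j) = 0$ is the max only when $n = 2$, the base case, which is fan-shaped — actually $(0,0)$). For $i \ne j$, I would locate the ears: in any $\eta$-sequence that is not fan-shaped, the dual tree has at least two branch vertices, hence at least three ears (leaves). The crucial step is to use Lemma \ref{strictlyinc} on the maximal branch-free interval containing $j$: if $[a,b]$ is the maximal interval with $c_\ell > 1$ for $a < \ell < b$ and $a < j < b$ (or $j$ at an endpoint), then along this interval $\varphi_i$ is strictly monotone away from $i$ on each side of $i$. A non-ear maximum $j$ with $c_j \ge 2$ would have to sit where this monotonicity "turns around," but the strict inequalities in Lemma \ref{strictlyinc} show $\varphi_i$ is \emph{strictly} monotone on each branch-free piece, so a local max at a non-ear can only occur at a branch vertex $j$ where three pieces meet. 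Then I would argue that at such a branch vertex, comparing the values coming in from the three directions via the additive rule of Rem.\ \ref{ci1}, maximality forces the "third" subtree to be trivial, collapsing the configuration until only one branch vertex remains — i.e. $c$ is fan-shaped.

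The main obstacle, I expect, is the bookkeeping at a branch vertex: when $c_j > 1$ and $j$ has $\ge 3$ incident triangles (equivalently the dual tree branches at $j$), the value $\varphi_i(j)$ is built from contributions along each branch, and I need to show that if $j$ were the maximum then all but one branch contribute "too little," which by the strict-increase lemma pins down their sizes. Handling the case distinction according to which branch contains $i$ — and ruling out that $\varphi_i(j)$ ties with or exceeds the values at the ears in the other branches unless those branches are paths of length forcing the fan shape — is where the real work lies. I would likely organize this as: (1) if $j$ is a non-ear and not a branch vertex, contradiction with strict monotonicity of Lemma \ref{strictlyinc}; (2) if $j$ is a branch vertex, use $\varphi_i(j) = $ (sum over branches of boundary contributions) and Lemma \ref{strictlyinc} on each branch to force every branch not containing $i$ to be a single triangle, and iterate; (3) conclude the only surviving shape is $(n-2,1,2,\dots,2,1)$ up to rotation.
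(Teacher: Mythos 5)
Your proposal does not reach a proof, and several of the structural claims it leans on are false, so the case analysis in steps (1)--(3) would collapse. The decisive point you never carry out is the local one: for $j\ne i$ the recursion you quote rearranges to $\varphi_i(j-1)+\varphi_i(j+1)=c_j\,\varphi_i(j)$, so if $j\in m_i$ is not an ear then $c_j\,\varphi_i(j)\le 2\varphi_i(j)$ forces $c_j=2$, and then equality forces $\varphi_i(j-1)=\varphi_i(j)=\varphi_i(j+1)$, so $j\pm1\in m_i$ as well; propagating (the run cannot reach $i$, since $i\notin m_i$) one obtains consecutive entries $(1,2,\ldots,2,1)$ around $j$, which forces the fan $(n-2,1,2,\ldots,2,1)$. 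This is the paper's entire argument, and it is where the fan exception actually enters. Your plan instead rests on the following assertions, each of which is wrong. First, between two ears $\varphi_i$ is not ``strictly increasing then strictly decreasing'': on an ear-free stretch the recursion shows that once $\varphi_i$ is weakly increasing it stays weakly increasing (if $\varphi_i(\ell)\ge\varphi_i(\ell-1)$ and $c_\ell\ge2$ then $\varphi_i(\ell+1)\ge\varphi_i(\ell)$), so the shape is a \emph{valley}, not a peak, and an interior maximum can occur only in the locally flat, $c_\ell=2$ case --- exactly the degenerate case your ``contradiction with strict monotonicity'' in step (1) cannot rule out, and the one that produces the fan. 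Note also that Lemma \ref{strictlyinc} is stated for stretches starting at $i$, so it does not apply verbatim to a stretch between two ears away from $i$. Second, a non-ear maximum need not sit at a branch vertex of the dual tree: in the fan itself the dual tree is a path with no branching, yet for $i$ the apex every vertex with $c_j=2$ lies in $m_i$; moreover $c_j\ge3$ is not equivalent to the dual tree branching at $j$ (again, the fan's apex). Third, ``not fan-shaped implies at least two branch vertices, hence at least three ears'' fails for snake triangulations, e.g.\ $c=(3,1,2,3,1,2)$ has exactly two ears, a path as dual tree, and is not fan-shaped.

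Concretely, then, the gap is twofold: step (1) as stated would ``prove'' that a non-ear, non-branch-vertex maximum is impossible, which the fan contradicts, so the intended monotonicity contradiction cannot be valid without first isolating the flat $c=2$ case; and step (2) analyzes branch vertices, which are not where the exceptional maxima live, using a ``sum over branches'' decomposition of $\varphi_i(j)$ that is not established (Rem.\ \ref{ci1} only gives the additive rule across a single triangle). To repair the argument you would in effect have to replace steps (1)--(3) by the equality analysis of $\varphi_i(j-1)+\varphi_i(j+1)=c_j\,\varphi_i(j)$ at a maximum, i.e.\ by the paper's proof.
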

\begin{proof}
Let $j\in m_i$. Then $\varphi_i(j)\ge \varphi_i(j+1)$ and $\varphi_i(j)\ge \varphi_i(j-1)$. Remember further that
\begin{equation}\label{j1j1}
\varphi_i(j+1)+\varphi_i(j-1) = c_j \varphi_i(j).
\end{equation}
Assume that $j$ is not an ear, thus $c_j>1$. If $c_j>2$ then $\varphi_i(j+1)>2\varphi_i(j)\ge 2\varphi_i(j+1)$ which is impossible. So $c_j=2$. But then $\varphi_i(j)=\varphi_i(j+1)=\varphi_i(j-1)$ by Equation \ref{j1j1}. Thus $j-1,j+1\in m_i$. Using induction we obtain a subsequence $(1,2,\ldots,2,1)$ around the position $j$ (possibly going beyond positions $1$ or $n$). This is only possible if $c$ is of the claimed form.
\end{proof}

\begin{defin}\label{defdense}
Let $c=(c_1,\ldots,c_n)\in\cAp$. Call $c$ \emph{\maxdensen} if
for all $i=1,\ldots,n$ either $|m_i|>1$ or $|m_{i+1}|>1$, where $m_{n+1}:=m_1$.
\end{defin}
\begin{remar}\label{onlyears}
It is easy to check that if $c$ is fan-shaped and
\maxdense of length $n$, then $n\in \{3,4\}$.
Thus if $n>4$ and if $c=(c_1,\ldots,c_n)\in\cAp$ is an arbitrary \maxdense sequence, then every $m_i$ only consists of ears by Lemma \ref{lonlyears}.
\end{remar}

\begin{propo}\label{earinmi}
Let $c=(c_1,\ldots,c_n)\in\cAp$. If $k$ is the number of ears in $c$, then
\[ |\{ i\in\{1,\ldots,n\} \mid |m_i|>1 \}| \le k. \]
\end{propo}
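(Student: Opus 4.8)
The plan is to bound $N:=\#\{i\mid |m_i|>1\}$ by a short double-counting argument, which reduces everything to a structural statement about the sets $A_e:=\{i\mid e\in m_i\}$, $e$ an ear. First dispose of the fan-shaped case (Def.\ \ref{deffan}): if $c$ is fan-shaped then either $n=3$, where $k=3$ and the bound is trivial, or $n\ge 4$, where $k=2$ and the $\varphi_i$ are explicit enough — $\varphi_1\equiv 1$ away from the apex, and for $i$ not the apex $\varphi_i$ first strictly increases and then strictly decreases around the cycle — to read off directly that $|m_i|>1$ only for the apex and at most the antipodal vertex, so $N\le 2=k$. So from now on assume $c$ is \emph{not} fan-shaped; by Lemma \ref{lonlyears} every $m_i$ then consists of ears, and in particular $\max_j\varphi_i(j)=\max_{f\ \mathrm{ear}}\varphi_i(f)$. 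Since a bad $i$ has $|m_i|-1\ge 1$,
\[
N\ \le\ \sum_{|m_i|>1}(|m_i|-1)\ =\ \Bigl(\sum_{|m_i|>1}|m_i|\Bigr)-N,
\qquad\text{so}\qquad 2N\ \le\ \sum_{|m_i|>1}|m_i|\ =\ \sum_{e\ \mathrm{ear}}|B_e|,
\]
where $B_e:=\{i\mid |m_i|>1,\ e\in m_i\}$ (here we used $m_i\subseteq\{\mathrm{ears}\}$). Thus it suffices to prove $|B_e|\le 2$ for every ear $e$, which yields $2N\le 2k$.

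Fix an ear $e$ and set $A_e:=\{i\mid e\in m_i\}\supseteq B_e$. The first step is that $A_e$ is a contiguous arc of $\{1,\dots,n\}$ (read cyclically) not containing $e$. Indeed, for a second ear $f$ apply Proposition \ref{gggelll} to the pair $(e,f)$ and, after a rotation, to the same pair viewed from the complementary arc — exactly the manoeuvre used in the proof of Corollary \ref{atmost2}. This shows that as $j$ runs around the cycle the sign of $\varphi_j(e)-\varphi_j(f)$ is negative on one open arc (the one through $e$), positive on the complementary open arc (through $f$), and $\{j\mid\varphi_j(e)=\varphi_j(f)\}$ consists only of the (at most) two switch points between these arcs, none of which is $e$ or $f$ since $\varphi_e(e)=0<\varphi_e(f)$. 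Hence $S_f:=\{j\mid\varphi_j(e)\ge\varphi_j(f)\}$ is a contiguous arc not containing $e$. Because $\max_j\varphi_i(j)$ is attained at an ear, $e\in m_i\iff\varphi_i(e)\ge\varphi_i(f)$ for all ears $f$, i.e. $A_e=\bigcap_{f\ \mathrm{ear},\,f\ne e}S_f$; cutting the cycle open at $e$ turns each $S_f$ into a genuine interval, so this intersection is an interval, proving the claim.

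The second step is that $B_e$ is contained in the set of (at most two) endpoints of the arc $A_e$. Let $i\in B_e$ and choose an ear $e'\in m_i$ with $e'\ne e$; then $\varphi_i(e)=\varphi_i(e')$ is the common maximum. By the sign analysis above, $i$ is one of the two switch points for $(e,e')$, hence lies strictly between $e$ and $e'$ on one of the two arcs; on that closed arc both neighbours $i-1$ and $i+1$ still lie, and there the comparison $\varphi_\bullet(e)$ versus $\varphi_\bullet(e')$ is strict with opposite signs at $i-1$ and at $i+1$. Therefore one of $i-1,i+1$ satisfies $\varphi(e)<\varphi(e')$, so $e$ is not in the corresponding $m$, i.e.\ that index is not in $A_e$. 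Thus $i$ is an endpoint of the arc $A_e$, so $|B_e|\le 2$, and the proof is complete.

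The crux I expect is the arc-structure of $A_e$: it forces us to upgrade the single switch point of Proposition \ref{gggelll} to the full two-sided sign pattern of $\varphi_\bullet(e)-\varphi_\bullet(f)$ around the whole cycle, and then to use that the intersection of the $S_f$ remains connected — which works precisely because every $S_f$ avoids the common vertex $e$. The fan-shaped case is genuinely exceptional (there $m_i$ can contain non-ears, so the above double count breaks down), but it is a routine explicit verification that must not be omitted.
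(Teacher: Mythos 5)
Your proof is correct and follows essentially the same route as the paper: the fan-shaped case is handled separately, Lemma \ref{lonlyears} reduces to ears, Proposition \ref{gggelll} gives the arc structure of the sets $A_e$ (the paper's $I_e$), and the count hinges on the observation that an index $i$ with $|m_i|>1$ must lie on the boundary of each such arc, of which there are at most two points per ear. The only (cosmetic) difference is bookkeeping: you prove the uniform bound $|B_e|\le 2$ and conclude by one double count $2N\le\sum_e|B_e|\le 2k$, whereas the paper splits the bad indices into the sets $N_1$, $N_2$ according to whether some $I_f$ is a singleton and invokes Corollary \ref{atmost2} for the boundary fact; your endpoint argument re-derives that corollary's content directly from Proposition \ref{gggelll}.
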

\begin{proof}
The case of fan-shaped sequences is easy to check. Thus assume that $c$ is not fan-shaped and therefore that all $m_i$ only consist of ears.
For ears $e,f$ let $$J_{e,f}:=\{i\in\{1,\ldots,n\}\mid \varphi_i(e)\ge \varphi_i(f)\}.$$ By Prop.\ \ref{gggelll}, these sets $J_{e,f}$ are of the form $\{\ell,\ell+1,\ldots,j\}$ for some $\ell\le j$ (view the $\eta$-sequence as a cycle, so possibly $J_{e,f}=\{1,\ldots,j,\ell,\ldots,n\}$ and $j<\ell$). Thus the intersection $$I_e:=\bigcap_{f \text{ ear}} J_{e,f}$$ also has the form $\{\ell,\ldots,j\}$, $\ell\le j$ because $e\notin J_{e,f}$ for any ear $f\ne e$. Notice that $e\in m_i$ if and only if $i\in I_e$: If $\varphi_i(e)\ge\varphi_i(f)$ for each ear $f$, then $\varphi_i(e)\ge\varphi_i(\ell)$ for each vertex $\ell$ by the same argument as in the proof of Lemma \ref{lonlyears}.

Now we count the number of $i$ with $|m_i|>1$: Let $k_1$ be the number of ears $f$ with $|I_f|=1$, and $k_2$ be the number of ears $f$ with $|I_f|>1$; we have $k\ge k_1+k_2$. Further, let
\begin{eqnarray*}
N_1 &:=& \{ i\in\{1,\ldots,n\} \mid |m_i|>1, \:\:\exists\: f\in m_i \::\: |I_f|=1\},\\
N_2 &:=& \{ i\in\{1,\ldots,n\} \mid |m_i|>1, \:\:\forall\: f\in m_i \::\: |I_f|>1\}.
\end{eqnarray*}
Then clearly $|N_1|\le k_1$. Let $i\in N_2$ and $e,f\in m_i$, $e\ne f$. Then $i\in I_e\cap I_f$. By Cor.~\ref{atmost2}, $|I_e\cap I_f|\le 2$ and $I_e\cap I_f$ consists of elements on the ``borders'' $\partial I_e$ and $\partial I_f$ of the intervals $I_e$ and $I_f$.
So $i\in N_2$ and $f\in m_i$ imply $i\in \partial I_f$. Thus the number of pairs $(i,f)$ with $i\in N_2$, $f\in m_i$ is at most twice the number of ears $f$ with $|I_f|>1$. We obtain
\begin{equation}\label{zweifach}
2|N_2| \le \sum_{i\in N_2} |m_i| \le \sum_{f \text{ ear},\: |I_f|>1} 2 = 2 k_2.
\end{equation}
Thus $|\{ i\in\{1,\ldots,n\} \mid |m_i|>1 \}| = |N_1|+|N_2| \le k_1+k_2 \le k$.
\end{proof}

\begin{corol}\label{cform}
Let $c=(c_1,\ldots,c_n)\in\cAp$ be \maxdensen.
\begin{enumerate}
\item Either $n=3$ and $c=(1,1,1)$, or $n$ is even and $c$ is of the form
\[ (1,*,1,*,\ldots) \quad\text{or}\quad (*,1,*,1,\ldots). \]
\item If $n>4$ then $(|m_1|,|m_2|,\ldots)\in \{(1,2,1,2,\ldots),(2,1,2,1,\ldots)\}$.
\item If $e$ is an ear, then there exists an $i$ with $e\in m_i$.
\end{enumerate}
\end{corol}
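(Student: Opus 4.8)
The plan is to deal with the fan-shaped sequences by hand and, for $n>4$, to read the three assertions off a sharpened form of the counting already carried out in the proof of Proposition~\ref{earinmi}.

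If $c$ is fan-shaped and \maxdensen, then $n\in\{3,4\}$ by Remark~\ref{onlyears}, which forces $c=(1,1,1)$ when $n=3$ and $c=(2,1,2,1)$ up to rotation when $n=4$; both have the shape asked for in~(1), (2) is vacuous, and (3) is an immediate computation of the $\varphi_i$. So assume $n>4$. Then $c$ is not fan-shaped, so by Remark~\ref{onlyears} every $m_i$ consists of ears, and Lemma~\ref{lonlyears}, Corollary~\ref{atmost2} and (the proof of) Proposition~\ref{earinmi} are available.

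Next I would pin down $n$, the number $k$ of ears, and the alternation. Since $c$ is \maxdensen, $\{i:|m_i|=1\}$ contains no two cyclically consecutive indices, so $|\{i:|m_i|>1\}|\ge\lceil n/2\rceil$; Proposition~\ref{earinmi} gives $|\{i:|m_i|>1\}|\le k$; and for $n\ge4$ no two ears are cyclically adjacent (if $c_i=c_{i+1}=1$, the unique triangle at $i$ and the unique triangle at $i+1$ both contain the boundary edge $\{i,i+1\}$ and hence coincide, which forces $n=3$), whence the $2k$ boundary edges $\{i-1,i\}$ and $\{i,i+1\}$ attached to the ears $i$ are pairwise distinct and $2k\le n$. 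These three facts force $n$ even, $k=n/2$, and $|\{i:|m_i|>1\}|=|\{i:|m_i|=1\}|=n/2$; so $\{i:|m_i|=1\}$ is a maximum cyclically independent set, i.e.\ one of the two ``alternating'' index sets, and the $n/2$ ears likewise form an alternating index set. The latter statement is precisely~(1).

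It remains to obtain (2) and (3), and here I would rerun the final paragraph of the proof of Proposition~\ref{earinmi} with the extra input $|\{i:|m_i|>1\}|=k$, keeping its notation $I_f$, $k_1$, $k_2$, $N_1$, $N_2$ (recall $f\in m_i\iff i\in I_f$ and $e\notin I_e$). The chain $|N_1|+|N_2|=k$, $|N_1|\le k_1$, $|N_2|\le k_2$, $k_1+k_2\le k$ then collapses to equalities. From $k_1+k_2=k$ no ear $f$ satisfies $I_f=\emptyset$, i.e.\ every ear lies in some $m_i$; this is~(3). From the equality in~(\ref{zweifach}), $\sum_{i\in N_2}|m_i|=2k_2=2|N_2|$, so $|m_i|=2$ for all $i\in N_2$, and since an ear with $|I_f|=1$ cannot meet $N_2$ the same tightness forces both endpoints of the arc $I_f$ into $N_2$ whenever $|I_f|>1$. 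Then $N_1=\emptyset$: for $i\in N_1$ one has $|m_i|\ge2$, so if an ear $g\in m_i$ had $|I_g|>1$ then picking a second ear $g'\in m_i$ would give, via Corollary~\ref{atmost2}, $i\in I_g\cap I_{g'}\subseteq\partial I_g\subseteq N_2$, contradicting $i\in N_1$; hence every ear in $m_i$ has $|I|=1$, but $|N_1|=k_1$ makes $f\mapsto(\text{the element of }I_f)$ injective on such ears, so $m_i$ contains at most one of them and $|m_i|\le1$, a contradiction. Thus $\{i:|m_i|>1\}=N_2$ and $|m_i|\le2$ for every $i$, which together with~(1) yields~(2). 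I expect this last step ($N_1=\emptyset$, i.e.\ $|m_i|\le2$) to be the only real difficulty: it is where one has to exploit the tightness of \emph{both} estimates of Proposition~\ref{earinmi} simultaneously in order to trap the endpoints of the arcs $I_f$ inside $N_2$; everything else is bookkeeping on top of results already at hand.
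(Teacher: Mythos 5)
Your proof is correct and follows essentially the same route as the paper: the count of Proposition~\ref{earinmi} played against the density bound and $k\le n/2$ forces $n$ even and $d=k=n/2$, and parts (2) and (3) are then read off from the tightness of the inequalities involving the arcs $I_f$, exactly as the paper does. Where the paper settles $*=2$ with one terse sentence, you make the equality in~(\ref{zweifach}) and the conclusion $N_1=\emptyset$ explicit; this is a correct elaboration of the same argument rather than a different method.
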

\begin{proof}
(1) Let $k$ be the number of ears.
Notice first that $k\le \frac{n}{2}$ except for $n=3$ in which case $c=(1,1,1)$ and $|m_1|=|m_2|=|m_3|=2$.
Assume now that $n>3$. Then by Prop.\ \ref{earinmi}, the number $d$ of $i$ with $|m_i|>1$ is at most $k$. But if $c$ is \maxdensen, then at least $\frac{n}{2}$ labels $i$ satisfy $|m_i|>1$. Thus $\frac{n}{2}\le k\le \frac{n}{2}$ if $n>3$. This is only possible if $n$ is even and $k=\frac{n}{2}$, hence every second entry in $c$ is a $1$. Notice also that $d=k$.

We now prove (2). We assume that $n>4$, so $c$ is not fan-shaped because it is \maxdensen.
The equality $d=k$ implies
\begin{equation}\label{m1m2}
(|m_1|,|m_2|,\ldots)\in \{(1,*,1,*,\ldots),(*,1,*,1,\ldots)\}
\end{equation}
where $*$ always denotes an integer greater than $1$.
As in the proof of Prop.\ \ref{earinmi}, for an ear $f$ we write $I_f$ for the set of $i$ with $f\in m_i$. Remember that $I_f$ is an ``interval'', that the number of $I_f$'s is $k$, and that two such $I_f$'s never intersect in consecutive positions (Cor.~\ref{atmost2}). These three facts together with (\ref{m1m2}) imply that the entries $*$ are equal to $2$.
We also notice that $I_f$ can never be the empty set, whence (3) (we check $n=3,4$ separately).
\end{proof}

\begin{lemma}\label{ef1}
Let $c\in\cAp$ be \maxdensen, $e<f$ be ears, and $\ell\notin\{e,\ldots,f\}$ with $m_\ell=\{e,f\}$.
Then either $c=(1,1,1)$ or $f=e+2$.
\end{lemma}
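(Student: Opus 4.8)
The plan is to reduce to the interesting range of $n$ and then argue by contradiction. For $n=3$ we have $c=(1,1,1)$; for $n=4$, $c=(2,1,2,1)$ and any two ears lie at distance $2$, so $f=e+2$; and $n=5$ does not occur for a \maxdense sequence by Cor.~\ref{cform}(1). Hence I may assume $c\neq(1,1,1)$ and, by Cor.~\ref{cform}, that $n\geq 6$ is even with the ears of $c$ at every second position. Suppose, for contradiction, that $f\neq e+2$. Then $f\geq e+4$, so $g:=e+2$ is an ear and $e<g<f$. Since $n>4$, Cor.~\ref{cform}(2) gives $|m_\ell|\le 2$, so $m_\ell=\{e,f\}$ means
\[
\varphi_\ell(g)<M:=\varphi_\ell(e)=\varphi_\ell(f)=\max_{1\le j\le n}\varphi_\ell(j),
\]
and, in the notation of Prop.~\ref{earinmi}, $\ell\in I_e\cap I_f$.

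Let $A:=\{f+1,f+2,\ldots,e-1\}$ (indices modulo $n$); it is nonempty because $\ell\in A$. If $|A|=1$ then $A=\{\ell\}$, so $\ell$ is a neighbour of both $e$ and $f$ and therefore $M=\varphi_\ell(e)=\varphi_\ell(f)=1$; since $\varphi_\ell(j)\geq 1$ for all $j\neq\ell$ this forces $\varphi_\ell(j)=1$ for every $j\neq\ell$. But $g\neq\ell$ (as $g=e+2$ while $\ell=e-1$ and $n>4$, so also $g\pm1\neq\ell$), so identity (\ref{j1j1}) at $j=g$ reads $\varphi_\ell(g-1)+\varphi_\ell(g+1)=c_g\varphi_\ell(g)$, that is $1+1=1$, a contradiction.

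So $|A|\geq 2$, and since ears occupy every second position $A$ contains an ear $g'$. Thus $e$ and $f$ are separated by an ear on each of the two arcs between them ($g$ on $\{e+1,\ldots,f-1\}$ and $g'$ on $A$); in particular $e$ and $f$ are not neighbours in the cyclic order of the ears. Now I use the properties of the intervals $I_h$ ($h$ an ear) collected while proving Cor.~\ref{cform}: each $I_h$ is a nonempty cyclic interval, there are $n/2$ of them, two of them meet in at most two non-consecutive positions (Cor.~\ref{atmost2}), $\bigcup_h I_h=\{1,\ldots,n\}$, and $\sum_h|I_h|=\sum_i|m_i|=3n/2$. Applying Prop.~\ref{gggelll} to each pair of ears shows that these intervals occur around the cycle in the same cyclic order as the ears (equivalently: as $i$ runs around the cycle, the ear(s) forming $m_i$ advance monotonically). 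Together with the count — $n/2$ intervals of total length $3n/2$, hence total overlap $n/2$, with consecutive ones overlapping in order to keep $\bigcup_h I_h$ connected — this forces $I_h\cap I_{h'}=\emptyset$ for all non-adjacent ears $h,h'$. In particular $I_e\cap I_f=\emptyset$, contradicting $\ell\in I_e\cap I_f$. Hence $f=e+2$.

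The one substantial point, which I expect to be the main obstacle, is the last paragraph: that the intervals $I_h$ are arranged around the cycle in the cyclic order of the ears $h$, so that non-adjacent ones are disjoint. This is where Prop.~\ref{gggelll} must be fed in uniformly over all pairs of ears (comparing $\varphi_{\,\cdot\,}(h)$ with $\varphi_{\,\cdot\,}(h')$ for every pair), after which the global count pins down every pairwise intersection; one should also check along the way that no $I_h$ is nested in another. By contrast the case $|A|=1$ is essentially immediate, since it collapses all values of $\varphi_\ell$ off the vertex $\ell$ to $1$.
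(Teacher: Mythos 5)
Your reduction to $n\ge 6$ with ears at every second position is fine, and the case $|A|=1$ is handled correctly (all values of $\varphi_\ell$ off $\ell$ collapse to $1$ and the recurrence (\ref{j1j1}) at the ear $g$ gives $2=1$). The problem is the main case: the assertion that ``the intervals $I_h$ occur around the cycle in the same cyclic order as the ears, hence $I_h\cap I_{h'}=\emptyset$ for non-adjacent ears'' is exactly the content of the lemma you are trying to prove (a point $\ell\in I_e\cap I_f$ is the same thing as $m_\ell\supseteq\{e,f\}$), and it is not derived from anything: ``Applying Prop.~\ref{gggelll} to each pair of ears shows\dots'' is a claim, not an argument, and you flag it yourself as the main obstacle. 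The counting step cannot close this gap, because the count ($n/2$ nonempty cyclic intervals, pairwise intersections of size at most $2$ and non-consecutive by Cor.~\ref{atmost2}, total length $3n/2$, union everything) is blind to which ear labels the intervals carry: an abstract family of intervals satisfying all these constraints can be labelled by the ears in any cyclic order whatsoever, so nothing forces the two intervals through a doubly covered position to belong to ears at distance $2$. What is missing is precisely a statement tying the \emph{location} of $I_h$ on the cycle to the location of the ear $h$, and Prop.~\ref{gggelll} for a single pair $(h,h')$ only says that $I_h\subseteq J_{h,h'}$ and $I_{h'}\subseteq J_{h',h}$, i.e.\ that the two intervals are separated by the two crossing points of that pair --- it does not by itself order all the $I_h$ coherently, nor exclude nesting, as you note.

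For comparison, the paper's proof avoids any global ordering statement: assuming an ear $j$ with $e<j<f$, it applies Prop.~\ref{gggelll} to the two pairs $(j,e)$ and $(j,f)$, using $\varphi_\ell(j)<\varphi_\ell(e)=\varphi_\ell(f)$ (since $m_\ell=\{e,f\}$) to locate the comparison intervals: one gets $\varphi_i(j)<\varphi_i(e)$ for all $j\le i\le\ell$ and $\varphi_i(j)<\varphi_i(f)$ for all $i$ in the complementary range, so $j$ lies in no $m_i$ at all, contradicting Cor.~\ref{cform}~(3). If you want to salvage your global route, you would have to prove the monotone-advancement claim for the sets $m_i$ (or the disjointness of $I_e$ and $I_f$ when both arcs between $e$ and $f$ contain an ear), and the natural way to do that is essentially this two-pair local argument --- at which point the counting paragraph becomes unnecessary.
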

\begin{proof}
We prove that there is no ear $j$ with $e<j<f$. Assume the converse and let $e<j<f$ be an ear.
Without loss of generality, $e<j<f<\ell$.
Then by Prop.\ \ref{gggelll}, $\varphi_i(j)<\varphi_i(e)$ for all $j\le i \le \ell$, and
$\varphi_i(j)<\varphi_i(f)$ for all $\ell\le i \le n$ and $1\le i \le j$. Thus there is no $m_i$ with $j\in m_i$; this contradicts Corollary \ref{cform} (3).
\end{proof}
\begin{corol}\label{ee2}
Let $c\in\cAp$ be \maxdense of length $n>4$.
\begin{enumerate}
\item For every $i$ there exists an ear $e$ with $m_i\in\{ \{e\}, \{e,e+2\}\}$.
\item For every ear $e$ there exists an $i$ with $m_i=\{e,e+2\}$.
\end{enumerate}
\end{corol}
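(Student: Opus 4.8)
The plan is to obtain (1) almost directly from Lemma~\ref{ef1} together with the combinatorial constraints in Corollary~\ref{cform}, and then to deduce (2) from (1) by a counting argument that turns ``surjectivity'' into ``injectivity''. For (1), fix $i$; since $n>4$ and $c$ is dense, $c$ is not fan-shaped, so Remark~\ref{onlyears} gives that $m_i$ consists of ears and Corollary~\ref{cform}(2) gives $|m_i|\in\{1,2\}$. If $|m_i|=1$ we are in the first alternative. If $m_i=\{e,f\}$ with $e,f$ distinct ears, note $i\notin\{e,f\}$ (because $\varphi_e(e)=0$ is not maximal, so $e\notin m_e$, and symmetrically for $f$). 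I would then rotate the $\eta$-sequence --- a rotation preserves $\cAp$, the length $n$, and the property of being dense (Definition~\ref{eta_seq}(2)), and merely cyclically shifts all the sets $m_j$ --- so as to arrange $i=n$; then $e,f\le n-1$, so, writing $e<f$, we have $n\notin\{e,\ldots,f\}$ while $m_n=\{e,f\}$, and Lemma~\ref{ef1} forces either $c=(1,1,1)$ (impossible since $n>4$) or $f=e+2$. Undoing the rotation yields $m_i=\{e,e+2\}$ with $e$ an ear.

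For (2), note first that by Corollary~\ref{cform}(1) the sequence $c$ has exactly $k:=n/2$ ears, and by Corollary~\ref{cform}(2) exactly $k$ of the indices $i$ satisfy $|m_i|=2$. By part (1), each such ``double'' index $i$ has $m_i=\{e_i,e_i+2\}$ for a uniquely determined ear $e_i$, so $i\mapsto e_i$ defines a map $\Phi$ between two $k$-element sets; assertion (2) is precisely that $\Phi$ is surjective, hence (the sets being finite of equal size) equivalent to $\Phi$ being injective, which is what I would prove.

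The crucial ingredient for injectivity is the observation that $\varphi_{e+1}(e)=\varphi_{e+1}(e+2)=1$, since $e$ and $e+2$ are exactly the two polygon-neighbours of $e+1$. Suppose $m_i=m_j=\{e,e+2\}$ with $i\ne j$; as $e$ and $e+2$ are simultaneously maximal for $\varphi_i$ (resp.\ $\varphi_j$), both $i$ and $j$ --- and also $e+1$ --- lie in the set $E:=\{x\mid\varphi_x(e)=\varphi_x(e+2)\}$. But Corollary~\ref{atmost2}, applied to the pair of ears $\{e,e+2\}$, gives $|E|\le2$, so $e+1\in\{i,j\}$, say $i=e+1$, whence $e\in m_{e+1}$. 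Then the argument used in the proof of Lemma~\ref{lonlyears} shows $\varphi_{e+1}(\ell)\le\varphi_{e+1}(e)=1$ for every vertex $\ell$, so $\varphi_{e+1}$ is identically $1$ away from $e+1$; this means the triangulation is a fan with apex $e+1$, contradicting that $c$ is not fan-shaped. Hence $\Phi$ is injective, which proves (2). The one genuinely delicate point is this injectivity step --- spotting that $e+1$ is forced into the solution set of $\varphi_x(e)=\varphi_x(e+2)$ and extracting the contradiction from the sharp bound $|E|\le2$ of Corollary~\ref{atmost2}; everything else reduces to bookkeeping with Lemma~\ref{ef1} and the counting constraints of Corollary~\ref{cform}.
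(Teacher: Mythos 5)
Your proof is correct. Part (1) is essentially the paper's argument: Remark \ref{onlyears} and Corollary \ref{cform}(2) reduce to $m_i=\{e,f\}$ with $e,f$ ears, and Lemma \ref{ef1} forces $f=e+2$; the rotation you insert is just the cyclic relabelling the paper uses implicitly, and the conclusion $m_i=\{e,e+2\}$ is of course to be read modulo $n$. For part (2) the paper's proof is literally the count ``$k=n/2$ pairs $\{e,e+2\}$ and $k$ sets $m_i$ with $|m_i|>1$''; equality of these two numbers alone does not give surjectivity of $i\mapsto m_i$, and your contribution is to supply the missing injectivity: since $\varphi_{e+1}(e)=\varphi_{e+1}(e+2)=1$, the vertex $e+1$ always lies in $\{x\mid \varphi_x(e)=\varphi_x(e+2)\}$, so Corollary \ref{atmost2} leaves room for at most one further index with $m_i=\{e,e+2\}$ unless $e\in m_{e+1}$, which would force $\varphi_{e+1}\equiv 1$ away from $e+1$ and hence a fan-shaped $c$, excluded for dense $c$ of length $n>4$. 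So your route has the same counting skeleton as the paper but makes rigorous the step the one-line proof leaves implicit, which is a genuine (and correct) completion rather than a different method. Two cosmetic remarks: the inequality $\varphi_{e+1}(\ell)\le\varphi_{e+1}(e)$ needs only Definition \ref{defmis}, not the argument of Lemma \ref{lonlyears}; and the passage from ``$\varphi_{e+1}$ identically $1$ off $e+1$'' to ``fan with apex $e+1$'' uses that $\varphi_i(j)=1$ exactly when $i$ and $j$ are joined by an edge (Remark \ref{ci1}), which is also how the paper uses this fact in the proof of Theorem \ref{cdense}.
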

\begin{proof}
(1) follows directly from Lemma \ref{ef1} and Corollary \ref{cform} (2), (3).
Ad (2): There are $k=\frac{n}{2}$ pairs of ears $\{e,e+2\}$ and $k$ sets $m_i$ with $|m_i|>1$.
\end{proof}

\begin{lemma}\label{gcdab}
Let $c\in\cAp$, $i,j,\ell$ be vertices of the corresponding triangulation, and assume that $j,\ell$ are connected by an edge.
Then $$\gcd(\varphi_i(j),\varphi_i(\ell))=1$$ for every $i\notin\{j,\ell\}$.
In particular, if $\varphi_i(j)=\varphi_i(\ell)$ then $$\varphi_i(j)=\varphi_i(\ell)=1.$$
\end{lemma}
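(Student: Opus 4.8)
The plan is to realize the entries $\varphi_i(j)$ as absolute values of $2\times 2$ determinants of a cyclic family of integer vectors, and then to read the $\gcd$ statement off the unimodularity of suitable pairs.

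Concretely, I would set $v_k:=\eta(c_1)\cdots\eta(c_{k-1})\binom{1}{0}\in\ZZ^2$ for $k=1,\dots,n$, together with $v_{n+1}:=-v_1$; this last identity is consistent because $\eta(c_1)\cdots\eta(c_n)=-\id$ by Remark~\ref{etatriangle}. Since $\det\eta(a)=1$ for every $a$, one checks immediately that $\det(v_k,v_{k+1})=1$ for $1\le k\le n$, and, by splitting the defining matrix product at the index $i$, that $\varphi_i(j)=\lvert\det(v_i,v_j)\rvert$ for all $i,j$ (for $j<i$ one invokes $\varphi_i(j)=\varphi_j(i)\ge 0$ from Remark~\ref{ci1}). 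In particular every consecutive pair $\{v_k,v_{k+1}\}$ is a $\ZZ$-basis of $\ZZ^2$, so each $v_k$ is a primitive vector.

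The geometric heart is the claim that whenever $j,\ell$ are joined by an edge of the triangulation, the pair $\{v_j,v_\ell\}$ is again a $\ZZ$-basis, i.e.\ $\det(v_j,v_\ell)=\pm 1$. For a side $(k,k+1)$ of the polygon this is the previous paragraph. For a diagonal I would induct on $n$: pick an ear $m$ (so $c_m=1$) and pass to the shorter $\eta$-sequence obtained by deleting $m$, using Remark~\ref{eta_rule} in the form $\eta(c_{m-1})\eta(c_m)\eta(c_{m+1})=\eta(c_{m-1}-1)\eta(c_{m+1}-1)$. A direct computation with this identity shows that all $v_k$ with $k\ne m$ are unchanged by the deletion; every edge of the original triangulation not incident to $m$ — including the base $(m-1,m+1)$ of the ear, which becomes a side of the shorter polygon — is an edge of the shorter triangulation, so the inductive hypothesis applies, while the two edges incident to $m$ are polygon sides already covered. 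The base case $n=3$, where $c=(1,1,1)$, is checked by hand.

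Finally, given an edge $(j,\ell)$ and $i\notin\{j,\ell\}$, I would expand $v_i=\alpha v_j+\beta v_\ell$ with $\alpha,\beta\in\ZZ$ in the basis $\{v_j,v_\ell\}$; then $\varphi_i(\ell)=\lvert\alpha\rvert$ and $\varphi_i(j)=\lvert\beta\rvert$, while primitivity of $v_i$ forces $\gcd(\alpha,\beta)=1$, hence $\gcd(\varphi_i(j),\varphi_i(\ell))=1$, and the ``in particular'' is then immediate. The one part that requires care is the bookkeeping in the inductive step: checking that the re-indexing after deleting the ear genuinely identifies the shorter sequence's vectors with the $v_k$ for $k\ne m$, and that every diagonal of $c$ (in particular the new side $(m-1,m+1)$) is accounted for. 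Everything else is routine given the $\eta$-calculus already developed in the excerpt.
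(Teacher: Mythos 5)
Your argument is correct, but it takes a genuinely different route from the paper. The paper's proof is a one-line appeal to the Weyl groupoid interpretation: the pair $(\varphi_i(j),\varphi_i(\ell))$ is the coordinate vector of a root of the rank-two Weyl groupoid at the object corresponding to the edge $(j,\ell)$ (citing \cite{p-CH09d}), and roots of such root systems are primitive integer vectors, whence the gcd is $1$. You instead reprove exactly this primitivity fact from scratch inside the $\eta$-calculus: the identification $\varphi_i(j)=\lvert\det(v_i,v_j)\rvert$ with $v_k=\eta(c_1)\cdots\eta(c_{k-1})\binom{1}{0}$ is correct (split the product at $i$ and use $\det\eta(a)=1$), consecutive pairs are unimodular, and your inductive ear-deletion step does work: with $c'=(\ldots,c_{m-1}-1,c_{m+1}-1,\ldots)$ one checks $\eta(c_{m-1})\eta(1)\binom{1}{0}=\eta(c_{m-1}-1)\binom{1}{0}$, so the surviving vectors are literally unchanged, every edge not through the ear $m$ (including the base $(m-1,m+1)$, now a side) persists, and the two edges through $m$ are sides since an ear meets no diagonal; the wrap-around when $m\in\{1,n\}$ only conjugates all $v_k$ by a common unimodular matrix, so determinants are unaffected. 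The final expansion $v_i=\alpha v_j+\beta v_\ell$ in the unimodular basis, with $\varphi_i(\ell)=\lvert\alpha\rvert$, $\varphi_i(j)=\lvert\beta\rvert$ and primitivity of $v_i$, then gives the claim. What your approach buys is a self-contained, elementary proof (and, as a by-product, the fact $\varphi_j(\ell)=1$ for edges $(j,\ell)$, which the paper also uses later); what it costs is precisely the re-indexing bookkeeping you flag, which the paper avoids entirely by outsourcing to the root-system literature.
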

\begin{proof}
The numbers $\varphi_i(j),\varphi_i(\ell)$ are coordinates of a root in a root system of a Weyl groupoid (see for example \cite{p-CH09d}).
\end{proof}

\begin{theor}\label{cdense}
Let $c\in\cAp$ be \maxdensen. Then up to cyclic rotations, $c$ is one of
the following sequences:
\[ (1,1,1),\quad (1,2,1,2),\quad (1,3,1,3,1,3),\]
\[ (1,3,1,4,1,3,1,4),\quad (1,3,1,5,1,3,1,5,1,3,1,5). \]
\end{theor}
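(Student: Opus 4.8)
The cases $n\le 4$ are immediate. For $n=3$, Corollary~\ref{cform}(1) forces $c=(1,1,1)$. For $n=4$ the only $\eta$-sequence up to rotation is $(1,2,1,2)$ (obtained from $(1,1,1)$ by one use of part~(3) of Definition~\ref{eta_seq}), and computing the maps $\varphi_i$ directly gives $|m_2|=|m_4|=3$, so it is \maxdensen. So assume $n>4$. By Corollary~\ref{cform} we then have $n=2k$ with $k\ge 3$, the ears of $c$ occupy exactly the positions of one fixed parity (say the odd ones), the sequence $|m_1|,|m_2|,\dots$ alternates strictly between $1$ and $2$, and, by Corollary~\ref{ee2}, every ear $e$ satisfies $m_i=\{e,e+2\}$ for some $i$ while every $i$ with $|m_i|=2$ is of this form. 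Write $c=(1,a_1,1,a_2,\dots,1,a_k)$.

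The key is to turn each relation $m_i=\{e,e+2\}$ into an arithmetic constraint. Fix such an $e$ and $i$ and set $g:=\varphi_i(e)=\varphi_i(e+2)=\max_\ell\varphi_i(\ell)$. From $\varphi_i(i)=0<g$ one gets $i\notin\{e,e+2\}$, and a short argument rules out $i=e+1$ as well; the three-term relation at the non-ear $e+1$ then reads $2g=c_{e+1}\varphi_i(e+1)$, whence $c_{e+1}\ge 2$ (as $\varphi_i(e+1)\le g$), while Lemma~\ref{gcdab} applied to the edge $\{e,e+1\}$ gives $\gcd(g,\varphi_i(e+1))=1$. Together these force $\varphi_i(e+1)\in\{1,2\}$, hence the entry between $e$ and $e+2$ equals $g$ or $2g$ (with $g$ odd in the first case). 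So each non-ear entry $a_j$ is $g_j$ or $2g_j$ for the associated local maximum $g_j$. Furthermore, collapsing all ears by Remark~\ref{eta_rule} shows that the values of every $\varphi_i$ at the even positions satisfy the three-term recursion with the coefficients $a_1-2,a_2-2,\dots$; i.e.\ they form the frieze of the ``inner $k$-gon'' obtained by deleting the ears, which links the $g_j$ to that smaller pattern and suggests an induction on $k$.

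It remains to propagate these identities around the cycle and to bound $k$. For this I would combine the interval picture from the proof of Proposition~\ref{earinmi} --- the $k$ cyclic intervals $I_e=\{i\mid e\in m_i\}$ are non-empty, have total length $\tfrac32 n$, pairwise meet in at most two non-adjacent points, and overlap precisely in the positions with $|m_i|=2$, which by strict alternation fixes how consecutive intervals are glued --- with the constraint $a_j\in\{g_j,2g_j\}$, the coprimality of Lemma~\ref{gcdab}, and the reduction $(a_1,\dots,a_k)\mapsto(a_1-2,\dots,a_k-2)$ to the inner polygon. This should reduce the classification to a finite computation whose outcome is that the inner sequence must be $(1,1,1)$, $(1,2,1,2)$ or $(1,3,1,3,1,3)$, i.e.\ $(a_1,\dots,a_k)$ is $(3,3,3)$, $(3,4,3,4)$ or $(3,5,3,5,3,5)$; reading these back gives the three remaining sequences, and a direct computation confirms that all five listed sequences are \maxdensen.

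The genuine difficulty is this last global step: each local constraint has many solutions in isolation, and the work is to show that the coprimality relations of Lemma~\ref{gcdab}, together with the alternation of the $|m_i|$ and the interval combinatorics, admit a simultaneous solution around the entire $n$-gon only for $n\in\{3,4,6,8,12\}$. In particular one must exclude the ``doublings'' of the length-$8$ and length-$12$ dense sequences (of lengths $16$ and $24$), which satisfy every easy necessary condition but fail the finer coprimality and alternation constraints.
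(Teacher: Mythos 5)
Your reductions for $n\le 4$ and your use of Corollary~\ref{cform}, Corollary~\ref{ee2}, Lemma~\ref{gcdab} and the ear-removal map $(c_1,1,c_3,1,\dots)\mapsto(c_1-2,c_3-2,\dots)$ are all sound and in fact mirror the paper's setup, and your local constraint (for $m_i=\{e,e+2\}$, the relation $2g=c_{e+1}\varphi_i(e+1)$ together with $\gcd(g,\varphi_i(e+1))=1$ forces $\varphi_i(e+1)\in\{1,2\}$) is correct. But the proof stops exactly where the theorem lives: the passage from these local constraints to the assertion that the inner sequence must be $(1,1,1)$, $(1,2,1,2)$ or $(1,3,1,3,1,3)$ is only announced as something that ``should reduce to a finite computation,'' and you yourself identify this global propagation step as the genuine difficulty. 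Since $n$ is not bounded a priori, there is no finite computation to fall back on without a further argument; as written, the conclusion of the classification is asserted rather than derived, so the proposal has a real gap.

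For comparison, the paper closes this gap by a short concrete case analysis rather than by propagating interval/coprimality data around the whole cycle. Since $\psi(c)$ (the inner sequence) must itself contain an ear, $c$ contains a block $(*,1,3,1,*)$, say at positions $1,\dots,5$. Corollary~\ref{ee2} gives an $i$ with $m_i=\{2,4\}$; writing $a=\varphi_i(1)$, $b=\varphi_i(5)$ one gets $\varphi_i(2)=2a+b$ and $\varphi_i(4)=a+2b$, so $a=b$, and Lemma~\ref{gcdab} applied to the edge $\{1,5\}$ (which is present because $c_3=3$) yields $a=b=1$, i.e.\ $1,5,i$ form a triangle. Then $\varphi_i(6)=c_5-3<\varphi_i(2)=3$ bounds $c_5\in\{3,4,5\}$, and in each case the entire sequence is forced, using $\psi$ together with the elementary uniqueness facts that $(1,1,1)$ is the only $\eta$-sequence with two adjacent ones and $(1,2,1,2)$ is the only one containing $(1,2,1)$ (plus \cite[Prop.~3.11]{p-CH09d} for the period-$(1,3,1,5)$ case). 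If you want to salvage your route, you would need to supply precisely this kind of argument: a device that pins down one non-ear entry to a small absolute value (here, the value $3$ coming from an ear of the inner polygon) and then rigidly propagates it, rather than constraints that are only relative to the unknown maxima $g_j$.
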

\begin{proof}
Denote $\cAp':=\{c\in\cAp\mid c_i=1 \text{ for all even } i \}$.
By Cor.\ \ref{cform} we may assume that $c=(*,1,*,1,\ldots)$, so $c\in\cAp'$. The only $\eta$-sequences of length $n\le 4$ are $(0,0)$, $(1,1,1)$, and $(1,2,1,2)$ (up to rotations); these cases are easy to compute. Thus assume that $n>4$ (hence $n\ge 6$ by Corollary \ref{cform}, and $c$ is not fan-shaped). For $a\in\ZZ$ let
\[ \xi(a) := \eta(a) \eta(1). \]
Then we have the rule
\[ \xi(a)\xi(3)\xi(b) = \xi(a-1)\xi(b-1) \]
for all $a,b$, and thus obtain that
\[ \psi : \cAp'\backslash\{(1,1,1)\} \rightarrow \cAp, \quad (c_1,1,c_3,1,\ldots) \mapsto (c_1-2,c_3-2,\ldots) \]
is a bijection (this corresponds to removing all ears at once in the triangulation).

Every element in $\cAp\backslash\{(0,0)\}$ has an ear, thus every element in $$\cAp'\backslash\{(1,1,1),(2,1,2,1)\}$$ has a three somewhere,
say $c=(*,1,3,1,*,\ldots)$.
By Corollary \ref{ee2}
there exists an $i>4$ with $m_i=\{2,4\}$.
If $\varphi_i(1)=a$ and $\varphi_i(5)=b$, then $\varphi_i(3)=a+b$, $\varphi_i(2)=2a+b$, and $\varphi_i(2)=a+2b$.
But $\varphi_i(2)=\varphi_i(4)$ or $2a+b=a+2b$. Then $a=b$ which is only possible if $a=b=1$ by Lemma \ref{gcdab}. Hence in the triangulation we have a triangle connecting $1$, $5$ and $i$, since by Remark \ref{ci1}, $\varphi_i(j)=1$ if and only if $i$ and $j$ are connected by an edge.

One computes $\varphi_i(6)=c_5-3$. But $2,4\in m_i$, so $3=\varphi_i(2)>\varphi_i(6)=c_5-3$. Thus we see that $2<c_5\le 5$.

If $c_5=3$ then $(1,3,1,3)$ is a subsequence of $c$. This implies $c=(3,1,3,1,3,1)$ since $(1,1,1)$ is the only element of $\cAp$ with two neighboring ones. If $c_5>3$, then we have at least $5$ triangles and thus $n>6$; so from now on let $n\ge 8$.

Assume $c_5=4$. Notice first that $m_1=\{4,6\}$. Indeed, let $m_j=\{4,6\}$ for some $j$, and write $a=\varphi_j(1)$, $b=\varphi_j(7)$.
Then after excluding $j\in\{2,3,4,5,6\}$ (which is easy) we get $$3a+2b=\varphi_j(4)=\varphi_j(6)=a+2b,$$ thus $a=0$ and we conclude that $j=1$.
Now if $c_7$ was greater than $3$, then $\varphi_1(8)\ge \varphi_1(4)$ and then $m_1$ would contain $8$, this contradicts $m_1=\{4,6\}$.
Further, $c_7=2$ is impossible because $(1,2,1,2)$ and $(2,1,2,1)$ are the only elements of $\cAp$ containing $(1,2,1)$.
Thus $c_7=3$ and $c=(c_1,1,3,1,4,1,3,\ldots)$. This implies $c=(4,1,3,1,4,1,3,1)$ since $(1,2,1,2)$ and $(2,1,2,1)$ are the only elements of $\cAp$ containing $(1,2,1)$, and $(1,2,1)$ is a subsequence of $\psi(c)$.

The last case is $c_5=5$.
Again, let $m_j=\{4,6\}$ for some $j$, and write $a=\varphi_j(1)$, $b=\varphi_j(i)$. This time we exclude $j\in\{2,3,4,5,6,7\}$ and we get
\[ 3a+2b = \varphi_j(4) = \varphi_j(6) = 2a+3b, \]
or $a=b$ and hence $a=b=1$ since $1$ and $i$ are connected by an edge. Thus $\varphi_j(4)=\varphi_j(6)=5$. Now if $c_7$ was greater than $3$, then $\varphi_j(8)$ would be greater than $5$, contradicting $8\notin m_j$.
This shows that $c=(c_1,1,3,1,5,1,3,1,\ldots)$ and thus that $c$ is periodic with period $(1,3,1,5)$. But then $\psi(c)=(3,1,3,1,\ldots)$ which is only possible if $c=(5,1,3,1,5,1,3,1,5,1,3,1)$ (compare \cite[Prop.\ 3.11]{p-CH09d}).
\end{proof}

\section{Applications}

\subsection{Weyl groupoids and arrangements}

We briefly recall the notions of Weyl groupoids and crystallographic arrangements, see \cite{p-CMa-13} for the most general definitions, and \cite{p-CH09a}, \cite{p-C10} for the original definitions in the finite case. Finite simplicial arrangements were introduced in \cite{a-Melchi41}.

\begin{defin}[compare {\cite{p-CMa-13}}]
Let $V=\RR^r$ be a finite dimensional real vector space.
Let $\Ac$ be a (possibly infinite) central arrangement in $V$, i.e.\ a set of linear hyperplanes in $V$, and $\Kc (\Ac)$ be the set of connected components of $V\backslash \bigcup_{H\in\Ac} H$.
Let $\emptyset\ne T\subseteq V$ be an open connected convex cone.
We call the pair $(\Ac,T)$ a \emph{thin simplicial arrangement} if:
\begin{enumerate}
\item[(S1)] $\Kc_T(\Ac):= \{ K\in \Kc (\Ac)\mid K\subseteq T\}$ consists of open simplicial cones called \emph{chambers}.
\item[(S2)] $H\cap T\ne \emptyset$ for all $H\in\Ac$.
\item[(S3)] For all $0\ne v\in T$ there exists a neighborhood $U$ of $v$ such that $\{H\in\Ac\mid H\cap U\ne\emptyset \}$ is finite.
\item[(S4)] The \emph{walls} of each chamber are elements of $\Ac$.
\end{enumerate}
\end{defin}

If $T=V$, then $\Ac$ is finite, and if $T$ is a half space, then $\Ac$ is \emph{affine}.
In this note we will omit the word ``thin'' (which corresponds to Axiom (S4)) since all considered arrangements are thin when they are simplicial.

\begin{defin}[compare {\cite{p-CMa-13}}]
A \emph{crystallographic arrangement} is a triple $(\Ac,T,R)$ where
$\Ac$ is a simplicial arrangement with Tits cone $T$, and
$R\subseteq V^*\backslash\{0\}$ such that:
\begin{enumerate}
\item[(CA)]
$\bullet$ \: $R\cap \langle\alpha\rangle_\RR = \{\pm\alpha\}$ for all $\alpha\in R$.
\\ \noindent$\bullet$ \:  $\Ac=\{\alpha^\perp\mid \alpha\in R\}$.
\\ \noindent$\bullet$ \:  For each $K\in\Kc_T(\Ac)$, let $\alpha_1,\ldots,\alpha_r\in R$ be such that $K$ is the dual cone of $\langle\alpha_1,\ldots,\alpha_r\rangle_{>0}$. Then
\label{cra:cryst} 
\[ R \subseteq \pm \sum_{i=1}^r \NN_0\alpha_i.\]
\end{enumerate}
\end{defin}

We ommit the definitions of Cartan schemes and Weyl groupoids because they are quite long and will not be needed in the sequel (see \cite{p-CH09a} for details).
It suffices to mention that connected simply connected Cartan schemes correspond to crystallographic arrangements.
Each chamber $K\in\Kc_T(\Ac)$ provides a unique basis as in (CA); with respect to this basis, the set $R$ becomes a subset $R^K\subseteq\ZZ^r$ which we call the \emph{root system} of $\Ac$ at $K$.

\subsection{The root posets of finite Weyl groupoids of rank two}
Finite crystallographic arrangements of rank two are in one-to-one correspondence with $\eta$-sequences (see for example \cite{p-CH09d}).
They are obtained in the following way: Start with a triangulation of a convex polygon by non-intersecting diagonals. Choose two neighboring vertices $i,i+1$, and write $(1,0)$ at vertex $i$ and $(0,1)$ at vertex $i+1$. Then if two vertices of a triangle have labels $\alpha$ and $\beta$, then write $\alpha+\beta$ at the third vertex. (This is exactly the procedure to get the numbers $\varphi_i(j)$ performed at two vertices simultaneously.) The set of all labels $R_+\subseteq\ZZ^2$ is the set of positive roots corresponding to the ``chamber'' $(i,i+1)$:
\[ R_+ = \{(\varphi_i(j),\varphi_{i+1}(j)) \mid j=1,\ldots,n\}. \]
The set $R_+$ becomes a poset via
\[ (a,b)\le(c,d) \quad:\Longleftrightarrow\quad a\le c \text{ and } b\le d. \]

Theorem \ref{cdense} yields the following result about the root posets $R_+$.

\begin{corol}\label{maxelt_in_poset}
Let $c\in\cAp$ and let $\Ac$ be the crystallographic arrangement associated to $c$.
Then there exists a chamber $K$ such that the root system $R^K$ at $K$ contains a unique maximal element $(x,y)\in\ZZ^2$.\\
Moreover, if $c$ is not dense then
\[ x>u \text{ and } y>v \quad\text{for all}\quad (x,y)\ne(u,v)\in R^K. \]
\end{corol}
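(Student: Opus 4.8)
The plan is to reduce the statement to a combinatorial fact about the entries $\varphi_i(j)$ and then invoke Theorem~\ref{cdense} via its contrapositive. First I would fix a crystallographic arrangement associated to $c=(c_1,\ldots,c_n)\in\cAp$ and recall that choosing a chamber $K=(i,i+1)$ identifies the positive roots with the pairs $(\varphi_i(j),\varphi_{i+1}(j))$, $j=1,\ldots,n$, ordered componentwise. A maximal element of $R^K$ is then a vertex $j$ such that no other vertex $j'$ dominates it in \emph{both} coordinates. The natural strategy is: pick a chamber $(e,e+1)$ adjacent to an ear $e$, i.e.\ with $c_e=1$ (or $c_{e+1}=1$); for such a chamber one coordinate function, say $\varphi_{e}$, satisfies $\varphi_e(e)=0$ while $\varphi_e(j)\ge 1$ for $j\ne e$, and likewise $\varphi_{e+1}$. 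I would then argue that the existence of a unique componentwise-maximal pair is equivalent to the existence of an index $j_0$ with $\varphi_e(j_0)\ge\varphi_e(j)$ and $\varphi_{e+1}(j_0)\ge\varphi_{e+1}(j)$ for all $j$, which in the language of Definition~\ref{defmis} says $m_e\cap m_{e+1}\ne\emptyset$, and uniqueness amounts to this intersection determining a single dominating root.

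Next I would make the connection to denseness. If $c$ is \emph{not} dense, Definition~\ref{defdense} gives a neighboring pair $i,i+1$ with $|m_i|=|m_{i+1}|=1$, say $m_i=\{p\}$, $m_{i+1}=\{q\}$. I would show $p=q$: by Remark~\ref{ci1} one of $c_i,c_{i+1}$ could be taken to be an ear after suitable reduction, but more directly, if $p\ne q$ then using Lemma~\ref{strictlyinc} and the recursion $\varphi_i(j+1)=c_j\varphi_i(j)-\varphi_i(j-1)$ one derives a contradiction with $|m_i|=1$, forcing the unique maxima of the two coordinate functions to coincide at a single vertex $x$. Then for this chamber $K=(i,i+1)$ the root $(x,y):=(\varphi_i(x),\varphi_{i+1}(x))$ satisfies $x=\varphi_i(x)>\varphi_i(j)$ and $y=\varphi_{i+1}(x)>\varphi_{i+1}(j)$ for every $j\ne x$ — strict inequalities because $m_i=\{x\}$, $m_{i+1}=\{x\}$ are singletons — which is exactly the ``Moreover'' clause. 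The first (non-strict) assertion then follows for the dense case by Theorem~\ref{cdense}: one checks directly on the five sequences $(1,1,1)$, $(1,2,1,2)$, $(1,3,1,3,1,3)$, $(1,3,1,4,1,3,1,4)$, $(1,3,1,5,1,3,1,5,1,3,1,5)$ that some chamber has a (possibly non-strict) unique componentwise-maximal root; these are small finite computations, readable off the friezes (e.g.\ Figure~\ref{friezeex} shows the value $5$ attained at a single position in each row for the $(3,1,4,1,3,1,4,1)$ case).

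The main obstacle I anticipate is proving that when $|m_i|=|m_{i+1}|=1$ the two singletons \emph{agree}, i.e.\ ruling out the configuration where $\varphi_i$ is maximized at one vertex and $\varphi_{i+1}$ at a different one, for then no dominating root would exist at that chamber and the argument would collapse. I would handle this by a parity/interlacing argument: translating $m_i=\{p\}$, $m_{i+1}=\{q\}$ to the interval description $I_f$ from the proof of Proposition~\ref{earinmi}, and using Corollary~\ref{atmost2} to show that the ``border'' elements of these intervals for neighboring chambers must coincide; alternatively, invoking Lemma~\ref{gcdab} to pin down $\varphi_i(p)=\varphi_{i+1}(p)$-type relations. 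A secondary, more bookkeeping-heavy obstacle is the exhaustive verification for the five dense sequences, but this is routine given the explicit friezes and the bijection $\psi$ used in the proof of Theorem~\ref{cdense}.
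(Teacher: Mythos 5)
Your skeleton is the same as the paper's (dense case by direct inspection of the five sequences of Theorem~\ref{cdense}; non-dense case via a pair $i,i+1$ with $|m_i|=|m_{i+1}|=1$), and you correctly identify the crux: the dominating root exists at the chamber $(i,i+1)$ only if the two singleton maxima sit at the \emph{same} vertex. But the lemma you propose to close this gap --- that $|m_i|=|m_{i+1}|=1$ forces $m_i=m_{i+1}$ --- is false, so no interlacing argument via Cor.~\ref{atmost2}, no border analysis of the intervals $I_f$, and no appeal to Lemma~\ref{gcdab} can establish it. Concretely, take the hexagon with $\eta$-sequence $c=(1,2,3,1,2,3)$ (triangles $(1,2,6)$, $(2,3,6)$, $(3,5,6)$, $(3,4,5)$), which is not fan-shaped. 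One computes $\varphi_2=(1,0,1,3,2,1)$ and $\varphi_3=(2,1,0,1,1,1)$ (values at vertices $1,\ldots,6$), hence $m_2=\{4\}$ and $m_3=\{1\}$: both are singletons, yet they differ, and at the chamber $(2,3)$ the root poset $\{(1,2),(0,1),(1,0),(3,1),(2,1),(1,1)\}$ has the two incomparable maximal elements $(3,1)$ and $(1,2)$, so no greatest element exists there. (The pentagon fan $c=(3,1,2,2,1)$ gives an even smaller example, with $m_3=\{5\}$, $m_4=\{2\}$.) So an arbitrary pair witnessing non-denseness need not work, and the step ``show $p=q$'' collapses.

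What the corollary actually needs is an \emph{existential} statement: for some adjacent pair $i,i+1$ there is a common vertex $v$ with $m_i=m_{i+1}=\{v\}$; in the hexagon example the chamber $(1,2)$ works, with greatest root $(5,3)$ dominating strictly. The paper's own proof is silent at exactly this point (it passes from $m_i=\{x\}$, $m_{i+1}=\{y\}$ directly to ``the root $(x,y)$''), so your instinct that something must be argued is sound, but the argument has to select the chamber rather than work for every non-dense pair. A viable route: if $c$ is not fan-shaped, every $m_i$ consists of ears (Lemma~\ref{lonlyears}); since each $m_i\ne\emptyset$ and the number of ears is at most $n/2$, the count $\sum_{f \text{ ear}}|I_f|=\sum_i|m_i|\ge n$ forces some cyclic interval $I_f$ (notation of Prop.~\ref{earinmi}) to contain two consecutive indices $i,i+1$, so $f\in m_i\cap m_{i+1}$ and the root at $f$ is a greatest element of $R^{(i,i+1)}$; one must then refine the choice, using non-denseness, to get $m_i=m_{i+1}=\{f\}$ and the strict inequalities of the ``moreover'' clause, and treat the fan-shaped sequences separately. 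Your reduction of ``unique maximal element'' to $m_i\cap m_{i+1}\ne\emptyset$ and your handling of the dense case are fine as they stand; the missing piece is this choice of chamber, not the coincidence claim you set out to prove.
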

\begin{proof}
The claim is easy to check if $c$ is dense. Assume now that $c$ is not dense. Then there exists a label $i$ such that $|m_i|=|m_{i+1}|=1$. But then $m_i=\{x\}$ and $m_{i+1}=\{y\}$ for some $x,y\in\NN$. The root $(x,y)$ is thus maximal by definition of the $m_i$'s.
\end{proof}

\subsection{Affine Weyl groupoids of rank three}

\begin{defin}
Let $(\Ac,T)$ be an affine simplicial arrangement of rank three in $V=\RR^3$. We say that $(\Ac,T)$ is of \emph{imaginary type $A_1^{(1)}$} if there exists a (finite) crystallographic arrangement $(\hat\Ac,\hat T,\hat R)$ of rank two, an \emph{imaginary root} $\imr\in V^*$, and an embedding $\iota : \hat R\rightarrow V^*$ such that
\[ \Ac = \{\alpha^\perp\mid \alpha\in \iota(\hat R)+\ZZ\imr\}. \]
\end{defin}

Fig.\ \ref{fig1315} is an example for an affine simplicial arrangement of imaginary type $A_1^{(1)}$.
We may use Thm.\ \ref{cdense} to prove the following:

\begin{theor}\label{mainthm}
If $(\Ac,T)$ is of imaginary type $A_1^{(1)}$, then the characteristic sequence of $\hat R$ is
\[ (1,1,1),\: (1,2,1,2),\: (1,3,1,3,1,3),\: \text{ or } \: (1,3,1,5,1,3,1,5,1,3,1,5). \]
In other words, either there exists a set $R$ such that $(\Ac,T,R)$ is a crystallographic arrangement where $R$ is the root system of an affine Weyl group of type $A$, $B$, $G$, or $(\Ac,T)$ is the exceptional arrangement presented in Fig.\ \ref{fig1315}.
The arrangement from Fig.\ \ref{fig1315} is not crystallographic.
\end{theor}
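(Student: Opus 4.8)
The plan is to reduce the problem to Theorem~\ref{cdense}. Write $c=(c_1,\dots,c_n)$ for the characteristic $\eta$-sequence of $\hat R$. Fixing one chamber of $\hat\Ac$ and a compatible basis of $V^*$ in which $\imr^\perp=\partial T$, the roots of $\Ac$ become the primitive vectors $(\varphi_i(j),\varphi_{i+1}(j),k)$ and their negatives, $1\le j\le n$, $k\in\ZZ$; equivalently, on the affine slice transverse to $\partial T$, $\Ac$ is the periodic line arrangement having exactly one parallel class of lines for each root of $\hat R$. Observe that two hyperplanes $\alpha^\perp$, $\beta^\perp$ of $\Ac$ meet inside $\partial T$ (away from the origin) if and only if the $\hat R$-parts of $\alpha$ and $\beta$ are proportional, i.e.\ lie in the same parallel class; hence any chamber of $\Ac$ whose closure meets $\partial T$ in more than the origin does so along a line $\imr^\perp\cap\beta^\perp$ for some root $\beta$ of $\hat R$, and two of its (at most three) walls then lie in the class of $\beta$.

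The heart of the proof is to show that $c$ is \maxdensen; once this is established, Theorem~\ref{cdense} leaves only the five sequences listed there. Assume $c$ is not dense. By Corollary~\ref{maxelt_in_poset} there is a chamber of $\hat\Ac$ whose root system contains a root $\theta$ all of whose coordinates are strictly larger than those of every other root; take this as the reference chamber above, so that $m_i=\{x\}$, $m_{i+1}=\{y\}$ with $(x,y)=\theta$. I would then contradict axiom~(S1) by a local analysis along the line $\imr^\perp\cap\theta^\perp$: strict dominance of $\theta$ makes its parallel class so much finer than all the others that, arbitrarily close to this line, $\Ac$ has a cell bounded only by the two neighbouring hyperplanes of the class of $\theta$, or by these together with two hyperplanes of other classes that do not meet inside the cell; in either case the cell is not a simplicial cone. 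The bookkeeping is controlled by Proposition~\ref{gggelll} (monotonicity of the $\varphi$'s) and by the maximality of $\theta$. Hence $c$ is dense and is one of the five sequences of Theorem~\ref{cdense}.

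Next I would discard $(1,3,1,4,1,3,1,4)$. For this sequence Corollary~\ref{ee2} gives $|m_i|\in\{1,2\}$ with $m_i=\{e,e+2\}$ in the non-singleton case, and the proof of Theorem~\ref{cdense} already identifies the relevant roots in the $c_5=4$ case; carrying out the same local analysis at a line $\imr^\perp\cap e^\perp$ for an ear $e$ with a neighbour $i$ satisfying $m_i=\{e\}$ again produces a non-simplicial cell, so this case does not occur, leaving the four possibilities in the statement. Finally one realizes the four surviving cases. For $(1,1,1)$, $(1,2,1,2)$ and $(1,3,1,3,1,3)$ the arrangement $\hat\Ac$ is the reflection arrangement of the rank-two Weyl group of type $A_2$, $B_2$, $G_2$, and its affinization is the linear cone over the alcove complex of $\tilde A_2$, $\tilde B_2$, $\tilde G_2$ (with the hyperplane at infinity removed); this cone is thin simplicial, and with $R$ the set of real roots of the corresponding affine root system, $(\Ac,T,R)$ is a crystallographic arrangement, which gives the ``type $A$, $B$, $G$'' alternative. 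For $(1,3,1,5,1,3,1,5,1,3,1,5)$ I would exhibit the arrangement of Figure~\ref{fig1315}, check directly that it is thin simplicial, and then verify that it is not crystallographic: the primitive normals of $\Ac$ are determined up to sign, so it suffices to pick one chamber, compute its three simple roots, and produce a hyperplane of $\Ac$ whose primitive normal is not a $\pm\NN_0$-combination of them (equivalently, a compatible $R$ would be forced to contain an affine root equal to $\tfrac{1}{5}$ of a lattice vector).

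I expect the reduction to density to be the main obstacle: this is where the geometry of simplicial arrangements has to be merged with the combinatorics of the $\varphi_i$'s, and making the ``non-simplicial cell near $\imr^\perp$'' argument airtight — in particular excluding that a further hyperplane of $\Ac$ always subdivides the offending region into simplices — is the delicate point. By contrast, Steps~2 and~3 are finite verifications, and the affine Weyl part of Step~3 is classical.
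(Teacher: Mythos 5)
Your overall skeleton (reduce to density, invoke Thm.~\ref{cdense}, discard $(1,3,1,4,1,3,1,4)$, realize types $A,B,G$, and show the exceptional arrangement is simplicial but not crystallographic) is the paper's skeleton, but the step you yourself identify as the heart of the proof has a genuine gap, and in fact the geometric mechanism you propose is the wrong one. First, the locus of your ``local analysis'' is empty of relevant chambers: inside $T$ every chamber of $\Kc_T(\Ac)$ is the cone over a \emph{bounded} cell of the induced affine line arrangement (one parallel class $\iota(\beta)+\ZZ\imr$ already confines cells to strips of bounded width, and any second, non-parallel class cuts each strip into bounded pieces), so no chamber contained in $T$ meets $\partial T$ outside the origin; moreover, by the $\ZZ\imr$-periodicity, the cells lying arbitrarily close to the line $\imr^\perp\cap\theta^\perp$ are translates of cells at bounded distance, so nothing degenerates there, and ``the class of $\theta$ is much finer than the others'' is a metric statement not determined by the combinatorial data. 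The place where non-density must be detected is the ray $\langle\imr\rangle_{\ge 0}$, through which all hyperplanes $\alpha^\perp$ with $\alpha\in\Rh$ pass; this is where the paper works, showing that the third walls of the $2n$ chambers adjacent to this ray are governed exactly by the numbers $|m_i|$ of Def.~\ref{defmis}, so that simpliciality of these chambers forces $c$ to be dense.

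Second, even relocated to that ray, your mechanism is inverted. Cor.~\ref{maxelt_in_poset} hands you a chamber of $\hat\Ac$ with a strictly dominant root $\theta=(x,y)$, but the cell adjacent to the ray in the corresponding sector is then exactly $\{u,v>0,\ xu+yv<1\}$ in the chamber coordinates, i.e.\ a triangle: strict dominance produces a \emph{simplicial} cell. A non-simplicial cell at the ray appears at a pair $(i,i+1)$ precisely when no single root maximizes both coordinates there, i.e.\ when $m_i\cap m_{i+1}=\emptyset$. Concretely, for the non-dense sequence $(3,1,2,2,1)$ one has $m_2=m_3=\{5\}$, and at the chamber $(2,3)$, with positive roots $(1,0),(0,1),(1,1),(2,1),(3,2)$, the cell at the ray is a triangle; the failure occurs instead at the chamber $(3,4)$, with roots $(1,0),(0,1),(1,1),(1,2),(2,1)$, where that cell is a quadrilateral. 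So the contradiction has to be produced at a pair of consecutive walls \emph{different} from the one supplied by Cor.~\ref{maxelt_in_poset}, and your sketch contains no argument that such a pair exists; deducing it from non-density is precisely the content of the paper's proposition relating the walls at the ray to the $|m_i|$. Relatedly, $(1,3,1,4,1,3,1,4)$ cannot be excluded by ``the same local analysis'': it is dense, so all cells at the ray are simplicial, and the obstruction sits in the interior of the affine arrangement, which the paper simply exhibits (Fig.~\ref{denseaff}). Your final step is fine in spirit, though the paper verifies non-crystallographicity of the exceptional arrangement by a determinant count over the $108$ chambers of a fundamental domain ($84$ base changes of determinant $\pm1$, $24$ of determinant $\pm\tfrac12$), not by the $\tfrac15$-of-a-lattice-vector obstruction you predict; either way that part is a finite check.
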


\begin{remar}
Notice that the finite Weyl groupoid of rank two corresponding to the sequence $(1,3,1,5,1,3,1,5,1,3,1,5)$ already appeared to be the largest root system in the classification of finite dimensional Nichols algebras of diagonal type of rank two (see \cite{a-Heck08a}).
\end{remar}

\subsection{Proof of Theorem \ref{mainthm}}

In this section, let $\hat R\subseteq\ZZ^2$ be a finite crystallographic arrangement of rank two to the $\eta$-sequence $c=(c_1,\ldots,c_n)$ and
\begin{eqnarray*}
R &=& \{ (a,b,d) \mid d\in\ZZ,\:\: (a,b)\in \hat R \},\\
\Ac &=& \{ \alpha^\perp \mid \alpha\in R \}.
\end{eqnarray*}
(The imaginary root will be $\imr=(0,0,1)$.)
We will write $\Rh:=\{(a,b,0)\mid (a,b) \in \hat R\}$.

\begin{figure}
\begin{center}
\includegraphics[width=\textwidth]{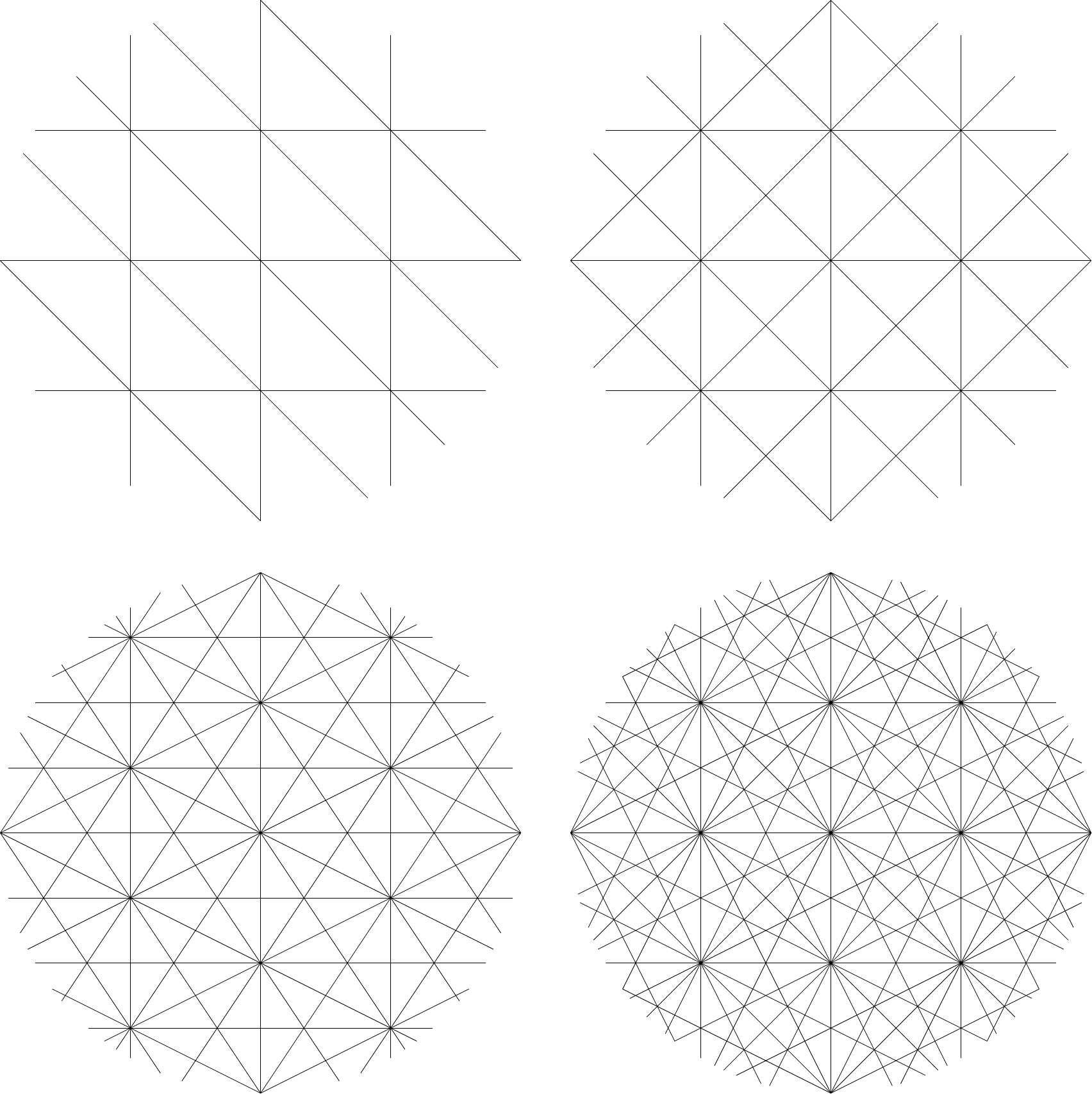}
\end{center}
\caption{Affine arrangements for the sequences $(1,1,1)$, $(1,2,1,2)$, $(1,3,1,3,1,3)$, $(1,3,1,4,1,3,1,4)$\label{denseaff}}
\end{figure}

\begin{propo}
If $\Ac$ is a simplicial arrangement, then $c$ is dense (see Def.\ \ref{defdense}).
\end{propo}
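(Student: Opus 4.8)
The plan is to prove the contrapositive: if $c$ is not \maxdensen, then $\Ac$ is not simplicial for any choice of Tits cone $T$. The sequence $c=(0,0)$ is handled directly ($\hat R$ is then of type $A_1\times A_1$ and $\Ac\cap\{x_3=1\}$ is the integer grid, whose chambers are squares), so assume $n\geq 3$. First I would pin down $T$. Since $\Ac$ is affine and central, $T$ is an open half-space through the origin; for each fixed $(a,b)\in\hat R$ the hyperplanes $(a,b,d)^\perp$, $d\in\ZZ$, all contain the line $L_{(a,b)}:=\{(z_1,z_2,0)\mid az_1+bz_2=0\}$ and accumulate at every point of $\{x_3=0\}\setminus\bigcup_{(a,b)\in\hat R}L_{(a,b)}$. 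A half-space meeting $\{x_3=0\}$ nontrivially contains a $2$-dimensional half-plane, which the finitely many lines $L_{(a,b)}$ cannot cover, so axiom (S3) forces $T\cap\{x_3=0\}=\emptyset$, i.e. $T=\{x_3>0\}$ after a sign change. Cutting with the affine plane $\{x_3=1\}$ identifies the chambers of $\Ac$ inside $T$ with the two-dimensional faces of the (locally finite) line arrangement $\Ac_1$ in $\RR^2$ given by the lines $\ell_{(a,b),d}:=\{az_1+bz_2=d\}$, with $(a,b)$ a positive root of $\hat R$ and $d\in\ZZ$. Since $\Ac_1$ has lines in the three pairwise non-parallel directions perpendicular to $(1,0),(0,1),(1,1)$, every face of $\Ac_1$ is bounded, and the cone over a face is a simplicial cone exactly when the face is a triangle. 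So it suffices to exhibit a face of $\Ac_1$ with at least four edges.

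The key input is Corollary \ref{maxelt_in_poset}. Because $c$ is not \maxdensen, I may choose a chamber $K$ so that in the associated basis the positive roots $R_+\subseteq\NN_0^2$ contain the simple roots $(1,0),(0,1)$ and $R^K$ has a \emph{strictly} dominant element $\rho_0=(x,y)$: $a<x$ and $b<y$ for every positive root $(a,b)\neq\rho_0$ (in particular $x,y\geq 2$). I then consider the segment $S:=\{(z_1,z_2)\mid xz_1+yz_2=1,\ 0<z_1<1/x\}$ on the line $\ell:=\ell_{\rho_0,1}$, with endpoints $p_1=(1/x,0)$ and $p_2=(0,1/y)$. At an interior point of $S$ one has $z_1,z_2>0$, hence $0<az_1+bz_2<xz_1+yz_2=1$ for every positive root $(a,b)\neq\rho_0$; therefore no line of $\Ac_1$ other than $\ell$ meets $S$, so $S$ lies in the relative interior of one edge of $\Ac_1$. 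Likewise, a line $\ell_{(a,b),d}$ through $p_1$ would need $a=dx$ with $0\le a\le x$, forcing $d\in\{0,1\}$ and $(a,b)\in\{(0,1),\rho_0\}$; so exactly two lines of $\Ac_1$ pass through $p_1$, namely $\ell$ and $\{z_2=0\}$, and symmetrically exactly two through $p_2$, namely $\ell$ and $\{z_1=0\}$.

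Now let $F'$ be the face of $\Ac_1$ bordering $S$ on the side $\{xz_1+yz_2>1\}$. Near $p_1$ the arrangement consists only of the two lines $\ell$ and $\{z_2=0\}$, and $F'$ occupies the local sector $\{z_2>0,\ xz_1+yz_2>1\}$; hence $F'$ has, besides $S$, an edge contained in $\{z_2=0\}$ issuing from $p_1$ into $\{z_1>1/x\}$, and, symmetrically, an edge contained in $\{z_1=0\}$ issuing from $p_2$ into $\{z_2>1/y\}$. These two edges lie on lines that meet only at the origin, which lies on neither of them, so they are non-adjacent edges of the bounded convex polygon $F'$; consequently $F'$ is not a triangle. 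Its cone is thus a chamber of $\Ac$ contained in $T$ which is not a simplicial cone, contradicting (S1). Therefore $\Ac$ is not simplicial, which proves the proposition.

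The step I expect to require the most care is the verification that $S$ is a single edge of $\Ac_1$ and that $p_1,p_2$ are ordinary nodes: both facts rest entirely on the strict inequalities $a<x$, $b<y$, and it is precisely the failure of density that upgrades the always-available unique maximal root of some $R^K$ to a strictly dominant one. For \maxdense $c$ the maximal root is not strict, the points $p_1,p_2$ each pick up a third line through them, and the construction correctly collapses — which is consistent with the fact that the \maxdense sequences do admit simplicial affine arrangements. The remaining ingredients (forcing $T$, boundedness of the faces of $\Ac_1$, and the dictionary between chambers of $\Ac$ in $T$ and faces of $\Ac_1$) are routine.
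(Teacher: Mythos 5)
Your argument is correct, and it takes a genuinely different route from the paper. The paper works directly at the imaginary ray: for each wall $\alpha^\perp$ with $\alpha\in\Rh$ it counts the roots through the intersection point nearest to $\langle\imr\rangle_{\ge 0}$, identifies these counts $m_\alpha$ with the combinatorial quantities $|m_i|$ of Def.\ \ref{defmis}, and observes that two consecutive values $1$ would force a non-simplicial chamber adjacent to the imaginary ray; density is then read off as a necessary condition. You instead prove the contrapositive away from the boundary: after pinning down $T=\{x_3>0\}$ via the accumulation of the parallel classes (a point the paper leaves implicit), you import the ``Moreover'' part of Cor.\ \ref{maxelt_in_poset} to get a chamber basis with a strictly dominant root $(x,y)$, and the strict inequalities $a<x$, $b<y$ then yield an explicit bounded face with at least four edges (the quadrilateral bounded by $S$, pieces of the two coordinate lines, and at least one further edge), hence a non-simplicial chamber inside $T$, contradicting (S1). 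Your verification that $S$ is a single edge and that $p_1,p_2$ are ordinary double points is complete, and the conclusion that the two axis edges cannot be adjacent (their lines meet only at the origin, which is excluded by $xz_1+yz_2\ge 1$) is sound; there is no circularity, since Cor.\ \ref{maxelt_in_poset} precedes the proposition and its non-dense part does not use it. What each approach buys: the paper's identification $m_\alpha=|m_i|$ is geometric information at infinity that is reusable (it also explains why the five dense sequences are the only candidates and how to test them), whereas your argument is more self-contained as a non-simpliciality certificate but leans on the strictly dominant root of Cor.\ \ref{maxelt_in_poset}, whose justification in the paper is very brief (it tacitly needs the unique maximizers for $\varphi_i$ and $\varphi_{i+1}$ to sit at a common vertex for a suitable $i$, and it fails literally for $c=(0,0)$) --- you rightly treat $(0,0)$ separately, and if you want your proof to be independent of that corollary you would need to supply this small argument yourself.
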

\begin{proof}
A necessary condition to obtain a simplicial arrangement is that all chambers adjacent to $\langle \imr\rangle_{\ge 0}$ are open simplicial cones.
Let $\alpha\in\Rh$ and define
\[ S:=\{\alpha^\perp\cap \gamma^\perp \mid \gamma \in R\backslash \Rh\} \]
to be the set of (projective) intersection points of hyperplanes lying on $\alpha^\perp$.
Choose the basis corresponding to a chamber adjacent to $\langle \imr\rangle_{\ge 0}$ and having $\alpha^\perp$ as a wall. With respect to this basis, $\alpha=(1,0,0)$ and $\imr=(0,0,1)$.
Then for $\gamma=(a,b,d)\in R$ we have $\alpha^\perp\cap\gamma^\perp=\langle (0,-\frac{d}{b},1)\rangle$.

Let $\{p_1,p_2\}\subseteq S\backslash \{\langle(0,0,1)\rangle\}$
be the two points of $S$ nearest to $(0,0,1)$ when projected to the affine hyperplane $H:=\{(a,b,1)\mid a,b\in\RR\}$.
Let $\beta_1=(a_1,b_1,d_1)$, $\beta_2=(a_2,b_2,d_2)\in R\backslash \Rh$ be such that
$\alpha^\perp\cap \beta_i^\perp=p_i$, $i=1,2$.
Since $(a_1,b_1,d_1)\in R$ implies $(a_1,b_1,1)\in R$ and since $p_1\cap H$ is nearest to $(0,0,1)$,
$d_1=1$; for the same reason, $d_2=1$. Both points have the same distance to $(0,0,1)$, thus $\frac{1}{b_1^2}=\frac{1}{b_2^2}$.
The number of roots $\beta$ such that $\alpha^\perp\cap\beta^\perp=p_1$ is thus
\[ m_\alpha := |\{(a,b,1)\in R \mid b=b_1\}|, \]
where $b_1 = \max \{b\mid (a,b,1)\in R\}$. Notice that this number depends on the coordinates and thus on the chosen basis; however it does not depend on the chosen chamber as above. We obtain a well defined number $m_\alpha$.

Now choose an ordering $\alpha_1,\ldots,\alpha_n,-\alpha_1,\ldots,-\alpha_n$ of the roots of $\Rh$ in such a way that two consecutive roots define walls of a chamber. These chambers can only be simplicial cones if there are no two consecutive $1$'s in the series
\[ m_{\alpha_1},m_{\alpha_2},\ldots,m_{\alpha_n},m_{-\alpha_1},\ldots,m_{-\alpha_n}. \]
It is easy to see that these numbers are exactly the $|m_i|$'s attached to the sequence $c=(c_1,\ldots,c_n)$ as in Def.\ \ref{defmis}.
Thus simpliciality of the arrangement defined by $R$ implies that $c$ is \maxdensen.
\end{proof}

Thus by Thm.\ \ref{cdense}, there are only five possible root systems $\hat R$ (up to isomorphisms). It is easy to see that the $\eta$-sequence $(1,3,1,4,1,3,1,4)$ does not define an affine simplicial arrangement (see Fig.\ \ref{denseaff}). The sequences $(1,1,1)$, $(1,2,1,2)$, and $(1,3,1,3,1,3)$ yield the classical affine arrangements of types $A$, $B$, and $G$.
The following proposition completes the proof of Thm.\ \ref{mainthm}.

\begin{propo}
The set
\[ R = \{ (x,y,z) \mid c\in\ZZ,\:\: (x,y)\in \hat R \}, \]
where $\hat R$ is the crystallographic arrangement
corresponding to the $\eta$-sequence $(1,3,1,5,1,3,1,5,1,3,1,5)$
is not an affine crystallographic arrangement.
\end{propo}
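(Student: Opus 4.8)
The proposition asserts that the triple $(\Ac,T,R)$ is not crystallographic, i.e.\ that Axiom (CA) fails for this $R$; since it is already established that $(\Ac,T)$ is a simplicial affine arrangement (it is the one drawn in Fig.~\ref{fig1315}), this is all that must be shown, and I would do it by exhibiting one chamber $K$ and one root of $R$ that is not a $\pm\NN_0$-combination of the simple roots of $K$. (The stronger claim that $\Ac$ carries no crystallographic structure at all then follows from the rigidity of such structures: applying (CA) to the chambers adjacent to $\langle\imr\rangle_{\ge0}$ forces any admissible root system to coincide with $R$ up to a global linear isomorphism. This last point is not needed for the proposition itself.)

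The key observation is that the product structure $\hat R\times\ZZ$ does \emph{not} force every rank-two (codimension-two) flat of $\Ac$ to meet all twelve root hyperplanes of $\hat R$: a root $(\beta,d)\in R$ lies on a given flat only when the appropriate lift $d$ of $\beta$ is an integer, and this can be obstructed by divisibility of the entries of the vectors of $\hat R$; concretely, if the flat is spanned by two finite roots that do not form a $\ZZ$-basis of $\ZZ^2$, then only a proper subset of the twelve directions occurs. So first I would compute the twelve positive roots $\beta_1,\dots,\beta_{12}\in\ZZ^2$ of $\hat R$ from the $\varphi$-recursion. After fixing a labelling one finds among them a linear relation with a coefficient $2$, namely $\beta_8=\beta_{11}+2\beta_5$ --- this is where the entry $5$ of the $\eta$-sequence makes itself felt, just as the equality $\varphi_j(4)=\varphi_j(6)=5$ was decisive in the proof of Theorem~\ref{cdense} --- and lifting gives $(\beta_8,4)=(\beta_{11},2)+2(\beta_5,1)$ together with $(\beta_2,0)=2(\beta_5,1)-(\beta_{11},2)$ inside $R$.

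Next I would take the codimension-two flat $X$ with $X^\perp=\langle(\beta_5,1),(\beta_8,4)\rangle$, which is the plane $\{(x,y,z)\mid y=2z\}$. Checking which $(\beta_l,d)\in R$ lie in this plane --- precisely those with $\beta_l$ having even second coordinate --- shows that \emph{exactly four} hyperplanes of $\Ac$ pass through $X$, carrying the roots $(\beta_2,0),(\beta_5,1),(\beta_8,4),(\beta_{11},2)$. Near $X$ the arrangement is the product of this rank-two arrangement of four lines with a transverse line, so there is a simplicial chamber $K$ of $\Ac$ two of whose walls are $(\beta_8,4)^\perp$ and $(\beta_{11},2)^\perp$ (these are consecutive lines in the four-line arrangement) and whose third wall $\rho_3^\perp$ does not contain $X$. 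Write $\alpha_1,\alpha_2,\alpha_3$ for the simple roots of $K$, with $\alpha_1=\pm(\beta_8,4)$, $\alpha_2=\pm(\beta_{11},2)$, $\alpha_3=\pm\rho_3$. Since $(\beta_5,1)\in R$ lies in $X^\perp=\langle\alpha_1,\alpha_2\rangle$ while $\alpha_3\notin X^\perp$, the coefficient of $\alpha_3$ in any expression of $(\beta_5,1)$ through $\alpha_1,\alpha_2,\alpha_3$ must vanish; but $(\beta_5,1)=\tfrac12(\beta_8,4)-\tfrac12(\beta_{11},2)$, so its coefficients on $\alpha_1,\alpha_2$ are $\pm\tfrac12$. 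Hence $(\beta_5,1)\notin\pm(\NN_0\alpha_1+\NN_0\alpha_2+\NN_0\alpha_3)$ and (CA) fails. Equivalently: the localised rank-two arrangement at $X$ is not crystallographic, since its four roots in cyclic order $\beta_2,\beta_5,\beta_8,\beta_{11}$ satisfy $\beta_2+\beta_8=4\beta_5$, which is impossible for a four-line crystallographic arrangement, the only $\eta$-sequence of length four being $(1,2,1,2)$.

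The main obstacle is purely computational bookkeeping: producing the twelve roots of $\hat R$, spotting the triple with the coefficient-$2$ relation, and confirming the parity count that isolates the four-line flat. The only step requiring care is arranging the choice of flat and chamber so that the failure of (CA) is as transparent as a single root occurring with coefficient $\tfrac12$. It is worth contrasting this with the sequences $(1,1,1)$, $(1,2,1,2)$, $(1,3,1,3,1,3)$: for those every codimension-two flat of $\hat R\times\ZZ$ meets two, three, or all of the root hyperplanes, and in each case the induced rank-two configuration is crystallographic (trivially, an $A_2$, or a linear copy of $\hat R$); only for $(1,3,1,5,1,3,1,5,1,3,1,5)$ does a flat meet exactly four hyperplanes in a configuration violating the constraint on four-line arrangements, which is precisely why it yields a simplicial but non-crystallographic affine arrangement.
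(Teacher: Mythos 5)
Your argument is correct in substance, but it takes a genuinely different route from the paper. The paper's proof is a global, computer-aided count: in a fundamental domain there are $108$ chambers, and relative to a fixed reference chamber $84$ of them have base change of determinant $\pm 1$ while $24$ have determinant $\pm\tfrac12$; since (CA) would force all base changes between chamber bases to lie in $\mathrm{GL}_3(\ZZ)$, this settles the claim. You instead localize at a single codimension-two flat: in the basis of a chamber of $\hat R$ the positive roots are $(0,1),(1,0),(1,1),(2,1),(3,1),(3,2),(4,3),(5,3),(5,4),(7,4),(7,5),(8,5)$, exactly four of which have even second coordinate, and with the lifts $(1,0,0),(3,2,1),(5,4,2),(7,4,2)$ (these are the roots lying in the plane $\{(x,y,z)\mid y=2z\}$; your stated lifts $(\beta_2,0),(\beta_5,1),(\beta_8,4),(\beta_{11},2)$ are not, since no root of $\hat R$ has second coordinate $8$ --- a harmless bookkeeping slip) one has $(7,4,2)=(1,0,0)+2(3,2,1)$, so any chamber whose walls lie on the adjacent hyperplanes $(1,0,0)^\perp$ and $(7,4,2)^\perp$ sees the root $(3,2,1)$ with coefficients $\pm\tfrac12$, and (CA) fails at one explicit chamber. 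What your approach buys is a hand-checkable proof that also explains where the paper's determinants $\pm\tfrac12$ come from and ties the failure to the rank-two fact that a crystallographic arrangement of four lines must be of type $(1,2,1,2)$; what the paper's approach buys is brevity, at the cost of an unexhibited computation. Do add one line justifying the chamber you use: the flat meets the half-space $T$, local finiteness (S3) yields the local product structure near a generic point of the flat, and since the three walls of a simplicial cone intersect only in the origin, the third wall cannot contain the flat, so the third simple root lies outside the plane spanned by the first two.
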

\begin{proof}
There are $108$ chambers in the fundamental domain. Fixing an adequate chamber $K$, there are $84$ chambers in the fundamental domain with base change of determinant $\pm 1$, and $24$ chambers in the fundamental domain with base change of determinant $\pm 1/2$.
\end{proof}

\begin{figure}
\includegraphics[width=0.48\textwidth]{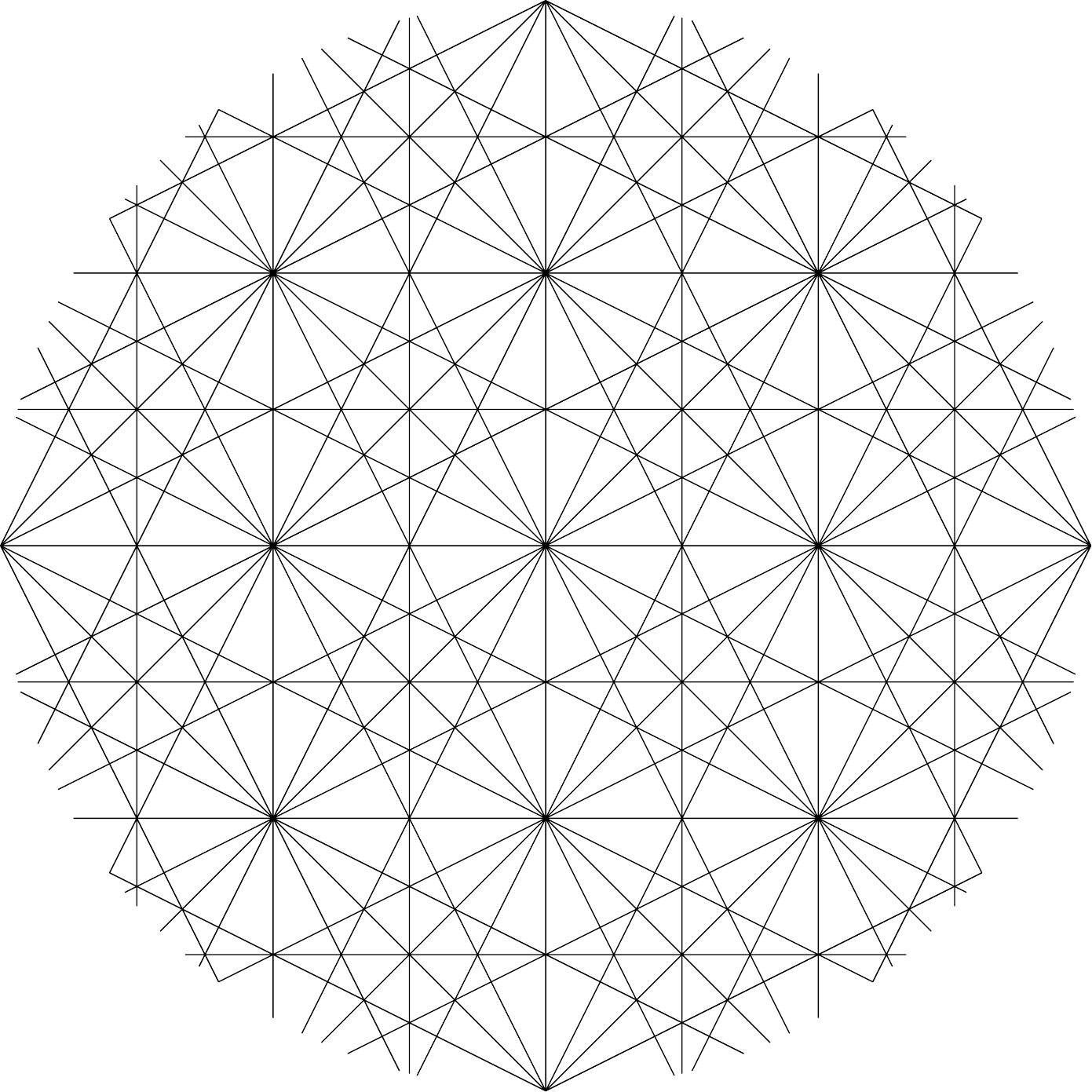}
\includegraphics[width=0.48\textwidth]{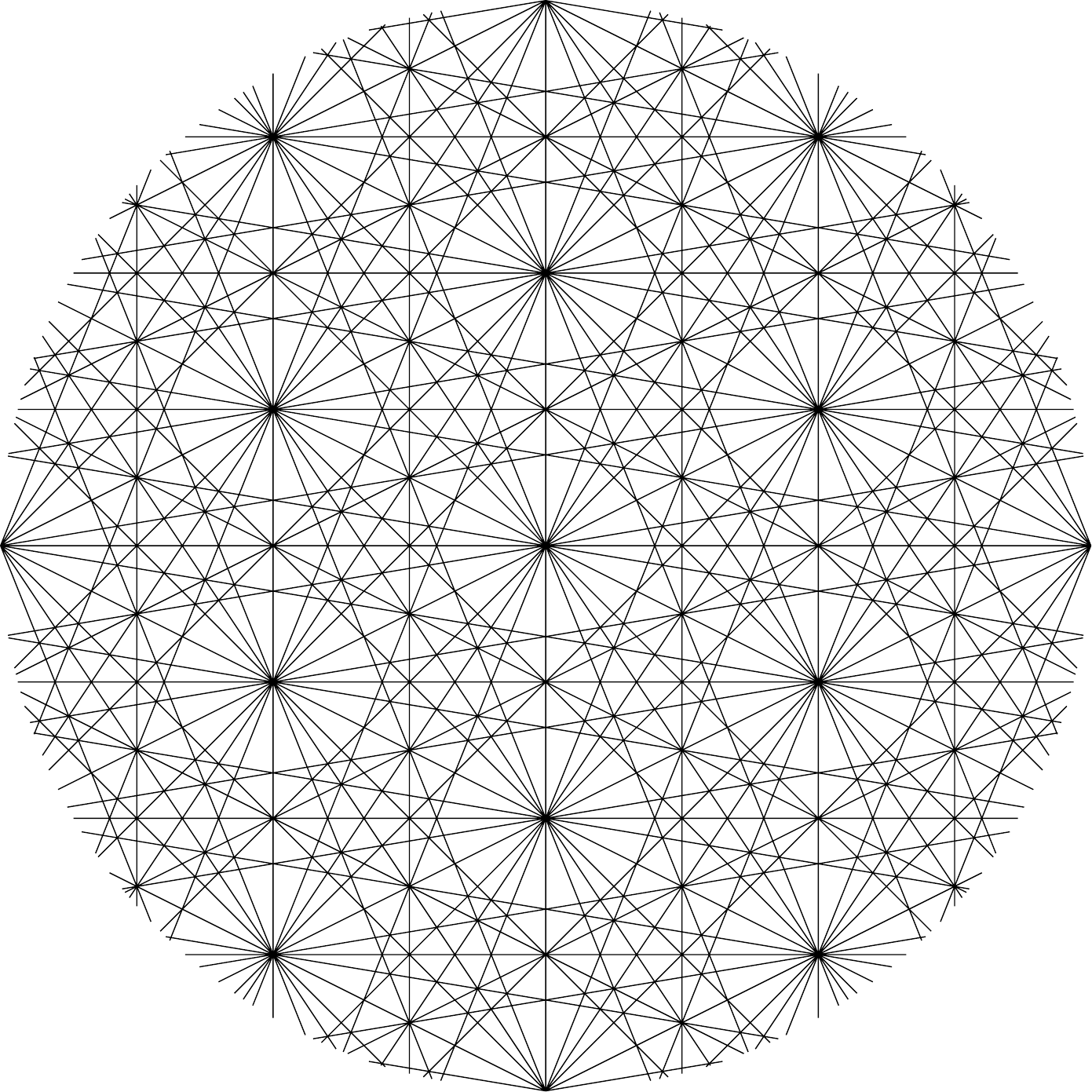}
\caption{Crystallographic arrangements based on the sequences $(1,3,1,4,1,3,1,4),(1,3,1,5,1,3,1,5,1,3,1,5)$\label{cry1415}}
\end{figure}

There exist crystallographic arrangements based on the sequences $(1,3,1,4,1,3,1,4)$ and $(1,3,1,5,1,3,1,5,1,3,1,5)$, i.e.\ adding further parallels yields a simplicial arrangement with Axiom (CA), see Figure \ref{cry1415}.
At a meeting in Oberwolfach, Bernhard M\"uhlherr and the author conjectured:
\begin{conje}\label{affwgconj}
Let $(\Ac,T,R)$ be an affine crystallographic arrangement of rank three in $V=\RR^3$, and let
$(\hat\Ac,\hat T,\hat R)$ be a (finite) crystallographic arrangement of rank two such that
\[ R \subseteq \iota(\hat R)+\ZZ\imr \]
for some $\imr\in V^*$ and $\iota : \hat R\rightarrow V^*$.
Then the characteristic sequence of $\hat R$ is one of those occurring in the classification of arithmetic root systems of rank two (see \cite{a-Heck08a}):
\[ (1,1,1),(1,2,1,2),(1,2,2,1,3),(1,2,2,2,1,4),(1,3,1,3,1,3), \]
\[ (1,2,3,1,3,2,1,5),(1,3,1,4,1,3,1,4),(1,2,3,2,1,4,1,4), \]
\[ (1,3,1,5,1,3,1,5,1,3,1,5). \]
\end{conje}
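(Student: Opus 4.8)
The plan is to prove the stated necessary condition by extending the local analysis of the proposition preceding Theorem~\ref{mainthm}, this time \emph{without} the hypothesis that the parabolic subgroupoid through $\imr$ be of type $A_1^{(1)}$. As a first step I would fix a chamber adjacent to the ray $\langle\imr\rangle_{\geq 0}$ and, for each root $\alpha\in\Rh$, study the hyperplanes $\gamma^\perp$ with $\gamma\in R\setminus\Rh$ meeting $\alpha^\perp$. Projecting the intersection points $\alpha^\perp\cap\gamma^\perp$ to the affine plane $H=\{(a,b,1)\mid a,b\in\RR\}$ and letting $m_\alpha$ be the multiplicity of the point nearest to $(0,0,1)$, simpliciality of the chambers touching $\langle\imr\rangle_{\geq 0}$ again forbids two consecutive $1$'s in the cyclic word $m_{\alpha_1},\dots,m_{\alpha_n}$. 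The essential new feature is that here $R$ is only required to be a \emph{subset} of $\iota(\hat R)+\ZZ\imr$; consequently $R$ need not contain $\iota(\alpha)+k\imr$ for every $k$, the numbers $m_\alpha$ need not coincide with the $|m_i|$ of Definition~\ref{defmis}, and the resulting admissibility condition is strictly weaker than density (Definition~\ref{defdense}). When $R$ is the full lattice lift — the untwisted $A_1^{(1)}$ case — one recovers the density condition, and Theorem~\ref{cdense} together with the additional argument of Theorem~\ref{mainthm} then produces exactly the sequences listed there.

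Second, I would enumerate the possible \emph{affine types} of the imaginary root. Each admissible sublattice $R\subseteq\iota(\hat R)+\ZZ\imr$ compatible with the groupoid action corresponds to one of the twisted affine configurations, and each imposes its own periodicity on the multiplicities $m_\alpha$, hence its own relaxed density condition on $c$. The crystallographic axiom (CA) is then brought in to pin down the arithmetic: requiring that all base changes between the chambers surrounding $\langle\imr\rangle_{\geq 0}$ lie in $\mathrm{GL}_3(\ZZ)$ (that is, have determinant $\pm 1$, as in the preceding proposition) forces integrality relations among the Cartan entries of the rank-two slice $\hat R$. The goal of this step is to show that these relations are precisely the arithmeticity conditions characterizing the root systems of finite-dimensional Nichols algebras of diagonal type, so that $\hat R$ is forced to be arithmetic and its characteristic sequence must appear in Heckenberger's classification \cite{a-Heck08a}, i.e.\ among the nine displayed sequences.

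The main obstacle is that, unlike in the $A_1^{(1)}$ case, there is no single clean combinatorial criterion (such as density) isolating the arithmetic $\eta$-sequences among all elements of $\cAp$. Carrying out the second step therefore appears to require a separate treatment for each admissible affine type of $\imr$, together with a proof that the relaxed density conditions so obtained cut out collectively \emph{exactly} the nine sequences of the list and no others. Matching this combinatorial output against Heckenberger's classification, rather than re-deriving that classification from scratch, is where I expect the argument to be hardest; it is precisely this matching that has so far resisted a uniform proof, which is why the statement is recorded here as a conjecture rather than a theorem.
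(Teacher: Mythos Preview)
The statement is recorded in the paper as a \emph{conjecture} (Conjecture~\ref{affwgconj}), not a theorem; the paper offers no proof, only the remark that crystallographic arrangements based on the two exceptional sequences do exist (Figure~\ref{cry1415}) and the attribution of the conjecture to M\"uhlherr and the author. There is therefore nothing to compare your proposal against.

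Your write-up is not a proof but a programme, and you say so yourself in the final paragraph. That is the honest assessment: the first step---relaxing the $A_1^{(1)}$ hypothesis and studying the multiplicities $m_\alpha$ for a general sublattice $R\subseteq\iota(\hat R)+\ZZ\imr$---is a natural extension of the proposition preceding Theorem~\ref{mainthm}, and your observation that the resulting condition is strictly weaker than density is correct and important. The second step, however, is where the argument is genuinely incomplete: you assert that the integrality constraints coming from (CA) ``are precisely the arithmeticity conditions'' of Heckenberger's classification, but you give no mechanism for establishing this coincidence, and indeed there is no known uniform combinatorial characterization of the nine arithmetic $\eta$-sequences inside $\cAp$. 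The case-by-case enumeration of admissible affine types you propose is plausible as a strategy, but until it is carried out (and shown to terminate with exactly the nine sequences) the statement remains open. In short: your outline is a reasonable research plan, not a proof, and the paper agrees with you on that point.
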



\def\cprime{$'$}
\providecommand{\bysame}{\leavevmode\hbox to3em{\hrulefill}\thinspace}
\providecommand{\MR}{\relax\ifhmode\unskip\space\fi MR }
\providecommand{\MRhref}[2]{%
  \href{http://www.ams.org/mathscinet-getitem?mr=#1}{#2}
}
\providecommand{\href}[2]{#2}

\end{document}